\crefname{secinapp}{Section}{Sections}
\Crefname{secinapp}{Section}{Sections}
\newlength{\dhatheight}
\newtheorem{theorem}{Theorem}[section]
\newtheorem{corollary}[theorem]{Corollary}
\newtheorem{definition}[theorem]{Definition}
\newtheorem{remark}[theorem]{Remark}
\newtheorem{lemma}[theorem]{Lemma}
\newtheorem{proposition}[theorem]{Proposition}
\newtheorem*{main1}{Theorem \ref{thm:bend}}
\newtheorem*{old}{Theorem}
\begin{document}

\begin{frontmatter}
\title{Resistance distance in bent linear 2-trees}
\author[byu1]{Wayne Barrett}
\author[byu1]{Emily. J. Evans}
\author[carroll]{Amanda E. Francis}

\address[byu1]{Department of Mathematics,
  Brigham Young University,
  Provo, Utah 84602, USA}
\address[carroll]{Department of Mathematics, Engineering, and Computer Science, 
  Carroll College,
  Helena, Montana 59601, USA}

\begin{abstract}
In this note we consider the bent linear 2-tree and provide an explicit formula for the resistance distance $r_{G_n}(1,n)$ between the first and last vertices of the graph.  We call the graph $G_n$ with vertex set $V(G_n) = \{ 1, 2, \ldots, n\}$ and $\{i,j\} \in E(G_n)$ if and only if $0<|i-j| \leq 2$ a straight linear 2-tree.  We define the graph $H_n$ with $V(H_n)= V(G_n)$ and $E(H_n) = (E(G_n) \cup \{k,k+3\})\setminus(\{k+1,k+3\}$ to be a bent linear 2-tree with bend at vertex $k$.  
%  Moreover, we show that $r_{n+1}(1,n+1) - r_n(1,n) \to \frac{1}{5}$ as $n \to \infty$, establishing that $r_n(1,n) \to \infty$ as $n \to \infty$. Consequently, $G_n$ is a geometric graph with entirely different properties than the random geometric graphs investigated in \cite{lostinspace}.  This gives preliminary evidence that resistance distance may yet be a viable method for link prediction, machine learning, etc. for certain subclasses of geometric graphs. 

\end{abstract}

\begin{keyword}
effective resistance, resistance distance, 2--tree
\end{keyword}

\end{frontmatter}

\section{Introduction}

An undirected graph $G$ consists of a finite set $V$ called the vertices, a subset $E$ of two-element subsets of $V$ called the edges, and a set $w$ of positive weights associated with each edge in $G$.  
%Our focus is on connected simple graphs, although we occasionally allow a single multiple edge when calculating resistance using the parallel rule.  
For convenience we usually take the vertex set $V$ to be $\{1, 2, \ldots, n\}$. 
The adjacency matrix of $G$, $A(G)$, is the $n\times n$ nonnegative symmetric matrix defined by 
\[
a_{ij} = \left\{ \begin{array}{cl} w(i,j) & \text{if } \{i,j\} \text{ is an edge of }G\\
0 & \text{otherwise} \end{array}\right. 
\]
and its Laplacian matrix, $L(G)$ is the $n\times n$ real symmetric matrix defined by 
\[
\ell_{ij} = \left\{ \begin{array}{cl} \deg(i) & \text{if } i = j \\
-w(i,j) & \text{if } i \neq j \text{ and } \{i,j\} \text{ is an edge of } G\\
0 & \text{otherwise,}\end{array}\right.
\]
where $\deg(i)$ is the sum of the edge weights of the edges incident to $i$. 

In this note we will consider the resistance distance (or effective resistance) in a so-called ``bent linear 2-tree''.  Roughly speaking the resistance distance between two nodes on a graph is determined by considering the graph as an electrical circuit with edges being represented as resistors.  The resistance on each edge is the reciprocal of the edge weight.  Using the standard laws of electrical conductance the resistance distance of the graph can be determined.  For undirected graphs, a well known result links the resistance distance between two nodes to the Moore-Penrose inverse of the graph Laplacian.  Specifically given two nodes $i$ and $j$ of a graph the effective resistance $r_G(i,j)$ between the nodes is given by
\begin{equation}
r_G(i,j) = (\mathbf{e}_i - \mathbf{e}_j)^T L^\dagger (\mathbf{e}_i - \mathbf{e}_j).
\end{equation}
This formula works well for small and intermediate sized graphs, but is difficult to apply both theoretically and computationally for very large graphs.  Other techniques, such as circuit transformations can also be applied to determine the resistance distance.  For a few graphs resistance distance between any two vertices has been calculated explicitly as a function of the number of vertices in the graph~\cite{BapatWheels, YangKlein,bef}.  

Before defining a bent linear 2-tree we recall the definition of a linear 2-tree and a straight linear 2-tree.
\begin{definition}[linear 2-tree]\label{def:lin2tree}
A linear 2-tree is a graph $G$ that is constructed inductively by starting with a triangle and connecting each new vertex to the vertices of an existing edge that includes a vertex of degree 2.  %In other words, a linear 2-tree is a 2-tree with exactly two vertices of degree 2. 
\end{definition}

We can also define a straight linear 2-tree as a special case of a linear 2-tree.
\begin{definition}[straight linear 2-tree]\label{def:lin2treest}
A straight linear 2-tree is a graph $G_n$ with $n$ vertices with adjacency matrix that is symmetric, banded, with the first and second subdiagonals equal to one and first and second superdiagonals equal to one, and all other entries equal to zero. See Figure~\ref{fig:2tree}. \end{definition} 

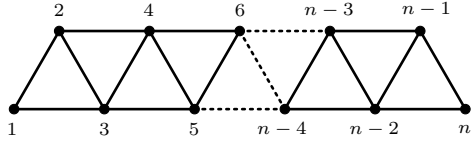
\begin{figure}[h!]
\begin{center}

\begin{tikzpicture}[line cap=round,line join=round,>=triangle 45,x=1.0cm,y=1.0cm,scale = 1.2]
\draw [line width=1.pt] (-3.,0.)-- (-2.,0.);
\draw [line width=1.pt] (-2.,0.)-- (-1.,0.);
\draw [line width=1.pt,dotted] (-1.,0.)-- (0.,0.);
\draw [line width=1.pt] (0.,0.)-- (1.,0.);
\draw [line width=1.pt] (1.,0.)-- (2.,0.);
\draw [line width=1.pt] (2.,0.)-- (1.5,0.866025403784435);
\draw [line width=1.pt] (1.5,0.866025403784435)-- (1.,0.);
\draw [line width=1.pt] (1.,0.)-- (0.5,0.8660254037844366);
\draw [line width=1.pt] (0.5,0.8660254037844366)-- (0.,0.);
\draw [line width=1.pt,dotted] (0.,0.)-- (-0.5,0.8660254037844378);
\draw [line width=1.pt] (-0.5,0.8660254037844378)-- (-1.,0.);
\draw [line width=1.pt] (-1.,0.)-- (-1.5,0.8660254037844385);
\draw [line width=1.pt] (-1.5,0.8660254037844385)-- (-2.,0.);
\draw [line width=1.pt] (-2.,0.)-- (-2.5,0.8660254037844388);
\draw [line width=1.pt] (-2.5,0.8660254037844388)-- (-3.,0.);
\draw [line width=1.pt] (-2.5,0.8660254037844388)-- (-1.5,0.8660254037844385);
\draw [line width=1.pt] (-1.5,0.8660254037844385)-- (-0.5,0.8660254037844378);
\draw [line width=1.pt,dotted] (-0.5,0.8660254037844378)-- (0.5,0.8660254037844366);
\draw [line width=1.pt] (0.5,0.8660254037844366)-- (1.5,0.866025403784435);
\begin{scriptsize}
\draw [fill=black] (-3.,0.) circle (1.5pt);
\draw[color=black] (-3.02279181666165,-0.22431183338253265) node {$1$};
\draw [fill=black] (-2.,0.) circle (1.5pt);
\draw[color=black] (-2.0001954862580344,-0.22395896857501957) node {$3$};
\draw [fill=black] (-2.5,0.8660254037844388) circle (1.5pt);
\draw[color=black] (-2.5018465162673555,1.100526386601008717) node {$2$};
\draw [fill=black] (-1.5,0.8660254037844385) circle (1.5pt);
\draw[color=black] (-1.5081915914412003,1.100526386601008717) node {$4$};
\draw [fill=black] (-1.,0.) circle (1.5pt);
\draw[color=black] (-1.0065405614318794,-0.22290037415248035) node {$5$};
\draw [fill=black] (-0.5,0.8660254037844378) circle (1.5pt);
\draw[color=black] (-0.4952423962300715,1.100526386601008717) node {$6$};
\draw [fill=black] (0.,0.) circle (1.5pt);
\draw[color=black] (-0.03217990699069834,-0.22431183338253265) node {$n-4$};
\draw [fill=black] (0.5,0.8660254037844366) circle (1.5pt);
\draw[color=black] (0.4887653934035965,1.103344389715898) node {$n-3$};
\draw [fill=black] (1.,0.) circle (1.5pt);
\draw[color=black] (0.9904164234129174,-0.22431183338253265) node {$n-2$};
\draw [fill=black] (1.5,0.866025403784435) circle (1.5pt);
\draw[color=black] (1.5692445349621338,1.1042991524908385) node {$n-1$};
\draw [fill=black] (2.,0.) circle (1.5pt);
\draw[color=black] (1.993718483431559,-0.22431183338253265) node {$n$};
\end{scriptsize}
\end{tikzpicture}
%\begin{tikzpicture}[auto,node distance=1.5cm, thick,main node/.style={circle,draw, fill=red!10!}]
%\node[main node] (1) [] {n};
%\node[main node] (2) [above left of=1] {n-1};
%\node[main node] (3) [below left of=2] {n-2};
%\node[main node] (4) [above left of=3] {n-3};
%\node[main node] (5) [below left of=4] {n-4};
%\node[main node] (6) [above left of=5] {6};
%\node[main node] (7) [below left of=6] {5};
%\node[main node] (8) [above left of=7] {4};
%\node[main node] (9) [below left of=8] {3};
%\node[main node] (10) [above left of=9] {2};
%\node[main node] (11) [below left of=10] {1};
%\foreach \s/\t in {2/1, 2/3, 3/1, 2/4, 3/5,3/4, 4/5, 6/7, 6/8, 7/8, 7/9, 8/9, 8/10, 9/10, 10/11, 9/11} {\path[draw] (\s) edge (\t);}  
%\foreach \s/\t in {4/6,5/7,5/6} {\path[draw,dotted] (\s) edge (\t);}  
%
%
%\end{tikzpicture}\\
\end{center}
\caption{A straight linear 2-tree}
\label{fig:2tree}
\end{figure}
Earlier work by the authors~\cite{bef} considered resistance distance in a straight linear 2-tree with $n$ vertices and $m=n-2$ triangles and obtained the following result.
\begin{old}
Let $G_n$ be the straight linear 2-tree on $n$ vertices labeled as in Figure \ref{fig:2tree}, and let $m = n-2$ be the number of triangles in $G_n$.  Then for any two vertices $j$ and $j+k$ of $G_n$, 
\[
r_m(j,j+k) = \frac{\sum_{i=1}^{k} \left( F_iF_{i+2j-2} - F_{i-1}F_{i+2j-3}\right)F_{2m-2i-2j+5}}{F_{2m+2}},
\] 
%or in closed form 
%	 \[r_m(j,j+k)=\frac{F_{m+1}^2+F_k^2F_{m-2j-k+3}^2+F_{m+1}\left(F_{m-k}(kL_k-F_k)+F_{m-k+1}\left((k-5)F_{k+1}+(2k+2)F_{k}\right)\right)/5}{F_{2m+2}}\]
\noindent where $F_p$ is the $p$th Fibonacci number. 
\end{old}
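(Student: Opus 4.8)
The plan is to read $r_m(j,j+k)$ off the electrical interpretation of the defining formula $r_G(a,b)=(\mathbf{e}_a-\mathbf{e}_b)^T L^\dagger(\mathbf{e}_a-\mathbf{e}_b)$: because $G_n$ is connected, $\ker L(G_n)=\operatorname{span}(\one)$, so the system $L(G_n)\x=\mathbf{e}_j-\mathbf{e}_{j+k}$ is consistent, its solutions form a single coset of $\operatorname{span}(\one)$, and therefore $x_j-x_{j+k}$ is well defined and equals $(\mathbf{e}_j-\mathbf{e}_{j+k})^T L^\dagger(\mathbf{e}_j-\mathbf{e}_{j+k})=r_m(j,j+k)$. (Here $\x$ is the vector of node potentials when a unit current is injected at $j$ and withdrawn at $j+k$.) So it is enough to solve this linear system and subtract the two relevant entries.

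The first step is to spell the system out, using that the adjacency matrix of $G_n$ is banded with bandwidth $2$: for each interior vertex $3\le i\le n-2$ the $i$th equation is
\[
4x_i-x_{i-2}-x_{i-1}-x_{i+1}-x_{i+2}=\delta_{ij}-\delta_{i,j+k},
\]
supplemented by four exceptional lower-degree equations at the end vertices $i\in\{1,2,n-1,n\}$. Away from $j$ and $j+k$ the sequence $(x_i)$ thus obeys the homogeneous recurrence with characteristic polynomial
\[
\lambda^4+\lambda^3-4\lambda^2+\lambda+1=(\lambda-1)^2(\lambda^2+3\lambda+1),
\]
whose roots are $1$ (double) and $-\varphi^{\pm2}$, $\varphi=\tfrac{1+\sqrt5}{2}$; a convenient real basis of the solution space is $\{\,1,\ i,\ (-1)^iF_{2i},\ (-1)^iL_{2i}\,\}$, with $F_p,L_p$ the $p$th Fibonacci and Lucas numbers. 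This is exactly where the Fibonacci numbers in the statement come from.

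The second step is to build an explicit solution. I would write $\x$ as a Green's-function-type expression — a particular sequence absorbing the two unit sources at $j$ and $j+k$ (producing jumps there, assembled from the basic solutions above) plus a homogeneous term — and fix the remaining constants from the four boundary equations at $1,2,n-1,n$, the one leftover degree of freedom being an additive multiple of $\one$ that is immaterial to $x_j-x_{j+k}$. Forming $x_j-x_{j+k}$ gives a closed rational expression in Fibonacci and Lucas numbers, and the last task is to collapse it, using standard identities such as $F_{a+b}=F_aF_{b+1}+F_{a-1}F_b$ and d'Ocagne's identity, into precisely $\bigl(\sum_{i=1}^{k}(F_iF_{i+2j-2}-F_{i-1}F_{i+2j-3})F_{2m-2i-2j+5}\bigr)/F_{2m+2}$. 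The denominator $F_{2m+2}$ will appear as the determinant of the reduced (grounded) Laplacian, that is, as the spanning-tree count $\tau(G_n)$; one checks separately — by a short transfer-matrix or deletion–contraction argument — that $\tau(G_n)=3\tau(G_{n-1})-\tau(G_{n-2})$ with $\tau(G_3)=3$ and $\tau(G_4)=8$, hence $\tau(G_n)=F_{2n-2}=F_{2m+2}$.

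I expect the difficulty to lie entirely in the bookkeeping rather than in any single idea: carrying the dozen-or-so constants through the matching and boundary equations without slips, and — more delicately — treating the degenerate configurations in which one of the ranges $1\le i\le j$, $j\le i\le j+k$, $j+k\le i\le n$ is empty or an end vertex coincides with $j$ or $j+k$ (most importantly $j=1$ or $j+k=n$, the case needed for the bent $2$-tree), since then the exceptional end equations and the source/sink equations overlap and must be merged. As an independent derivation that sidesteps the recurrence algebra, and as a check on small cases such as $r_3(1,3)=13/21$, one can instead invoke the matrix–tree theorem: $r_m(j,j+k)=N(j,j+k)/\tau(G_n)$, where $N(j,j+k)$ counts the spanning $2$-forests of $G_n$ separating $j$ from $j+k$, equivalently $N(j,j+k)=\tau\bigl(G_n/\{j=j+k\}\bigr)$, and evaluating this spanning-tree count of the almost-banded contracted graph by row expansion reproduces the sum over $i$, with $i$ marking the position at which the two trees of the forest interleave.
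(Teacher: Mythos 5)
This statement is quoted from the authors' earlier paper \cite{bef} and is not proved here; both there and in the present paper the method of choice is circuit reduction --- repeated $\Delta$--Y transformations collapsing one triangle at a time, with the resulting tail/branch resistances expressed in Fibonacci and Lucas numbers and the closed form established by induction on identities such as Proposition~\ref{prop:newprop2}. Your route is genuinely different: you work directly with the linear system $L\x=\mathbf{e}_j-\mathbf{e}_{j+k}$, exploit the banded structure to get the constant-coefficient recurrence with characteristic polynomial $(\lambda-1)^2(\lambda^2+3\lambda+1)$, and propose a Green's-function solution, with the matrix--tree/2-forest count $r=\tau(G_n/\{j{=}j{+}k\})/\tau(G_n)$ as an alternative. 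The framework is sound: the recurrence, its factorization, the basis $\{1,i,(-1)^iF_{2i},(-1)^iL_{2i}\}$, the identity $\tau(G_n)=F_{2m+2}$, and the check $r_3(1,3)=13/21$ are all correct, and this approach has the advantage of explaining \emph{a priori} where the Fibonacci numbers and the denominator $F_{2m+2}$ come from, whereas the $\Delta$--Y route produces them only through a cascade of identities. The weakness is that essentially the entire content of the theorem --- determining the Green's-function coefficients from the four boundary equations and the two source conditions, and collapsing the result to the specific bilinear sum $\sum_{i=1}^{k}(F_iF_{i+2j-2}-F_{i-1}F_{i+2j-3})F_{2m-2i-2j+5}$ --- is deferred to ``bookkeeping'' and never carried out; as written this is a credible proof plan rather than a proof, and you should either execute the coefficient matching in full (including the degenerate cases $j\in\{1,2\}$, $j+k\in\{n-1,n\}$ you correctly identify) or complete the 2-forest count, before the claim can be considered established by your argument.
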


In this work we consider a modification of the straight linear 2-tree which we term the bent linear 2-tree whose definition is below.
\begin{definition}[bent linear 2-tree]\label{def:lin2treestb}
We define the graph $H_n$ with $V(H_n)= V(G_n)$ and $E(H_n) = (E(G_n) \cup \{k,k+3\})\setminus(\{k+1,k+3\}$ to be a bent linear 2-tree with bend at vertex $k$.
%A bent linear 2-tree is a graph with $n$ such that for some $k \in \{3,4, \ldots, n-3\}$, its adjacency matrix is
%\[a_{ij} = \begin{cases} 1 & \text{if $j = i+1$}\\
%1 & \text{if $j = i-1$}\\
%1 & \text{if $j = i+2$ and $ i \neq k+1$}\\
%1 & \text{if $j = i-2$ and $i \neq k+3$}\\
%1 & \text{if $j = i+3$ and $i = k$}\\
%1 & \text{if $j = i-3$ and $i = k+3$}\\
%0 & \text{otherwise.}\end{cases}\]
 See Figure~\ref{fig:bent}.
\end{definition}
\begin{figure}
\begin{center}
\begin{tikzpicture}[line cap=round,line join=round,>=triangle 45,x=1.0cm,y=1.0cm,scale = 1.2]
\draw [line width=.8pt] (-5.464101615137757,2.)-- (-4.598076211353318,1.5);
\draw [line width=.8pt] (-4.598076211353318,1.5)-- (-4.598076211353318,2.5);
\draw [line width=.8pt] (-4.598076211353318,2.5)-- (-3.732050807568879,2.);
\draw [line width=.8pt] (-3.732050807568879,2.)-- (-3.7320508075688785,3.);
\draw [line width=.8pt,dotted] (-3.7320508075688785,3.)-- (-2.8660254037844393,2.5);
\draw [line width=.8pt] (-2.8660254037844393,2.5)-- (-2.866025403784439,3.5);
\draw [line width=.8pt] (-2.866025403784439,3.5)-- (-2.,3.);
\draw [line width=.8pt] (-2.,3.)-- (-2.,4.);
\draw [line width=.8pt] (-2.,4.)-- (-1.1339745962155612,3.5);
\draw [line width=.8pt] (-1.1339745962155612,3.5)-- (-2.,3.);
\draw [line width=.8pt] (-2.,3.)-- (-1.1339745962155616,2.5);
\draw [line width=.8pt] (-1.1339745962155616,2.5)-- (-1.1339745962155612,3.5);
\draw [line width=.8pt] (-1.1339745962155612,3.5)-- (-0.2679491924311225,3.);
\draw [line width=.8pt] (-0.2679491924311225,3.)-- (-1.1339745962155616,2.5);
\draw [line width=.8pt,dotted] (-1.1339745962155616,2.5)-- (-0.26794919243112303,2.);
\draw [line width=.8pt,dotted] (-0.26794919243112303,2.)-- (-0.2679491924311225,3.);
\draw [line width=.8pt,dotted] (-0.2679491924311225,3.)-- (0.5980762113533165,2.5);
\draw [line width=.8pt] (0.5980762113533165,2.5)-- (-0.26794919243112303,2.);
\draw [line width=.8pt] (-0.26794919243112303,2.)-- (0.5980762113533152,1.5);
\draw [line width=.8pt] (0.5980762113533152,1.5)-- (0.5980762113533165,2.5);
\draw [line width=.8pt] (0.5980762113533165,2.5)-- (1.464101615137755,2.);
\draw [line width=.8pt] (1.464101615137755,2.)-- (0.5980762113533152,1.5);
\draw [line width=.8pt] (-2.,4.)-- (-2.866025403784439,3.5);
\draw [line width=.8pt,dotted] (-2.866025403784439,3.5)-- (-3.7320508075688785,3.);
\draw [line width=.8pt] (-3.7320508075688785,3.)-- (-4.598076211353318,2.5);
\draw [line width=.8pt] (-4.598076211353318,2.5)-- (-5.464101615137757,2.);
\draw [line width=.8pt] (-4.598076211353318,1.5)-- (-3.732050807568879,2.);
\draw [line width=.8pt,dotted] (-3.732050807568879,2.)-- (-2.8660254037844393,2.5);
\draw [line width=.8pt] (-2.8660254037844393,2.5)-- (-2.,3.);
\begin{scriptsize}
\draw [fill=black] (-2.,4.) circle (1.5pt);
\draw[color=black] (-2.06648828953202,4.259331085745072) node {$k+1$};
\draw [fill=black] (-1.1339745962155612,3.5) circle (1.5pt);
\draw[color=black] (-0.9407359844212153,3.7428094398707032) node {$k+2$};
\draw [fill=black] (-2.,3.) circle (1.5pt);
\draw[color=black] (-2.0267558552339913,2.6989231016718021) node {$k$};
\draw [fill=black] (-2.866025403784439,3.5) circle (1.5pt);
\draw[color=black] (-3.0597991469827295,3.689832860806665) node {$k-1$};
\draw [fill=black] (-2.8660254037844393,2.5) circle (1.5pt);
\draw[color=black] (-2.702207238300474,2.2919066737377372) node {$k-2$};
\draw [fill=black] (-1.1339745962155616,2.5) circle (1.5pt);
\draw[color=black] (-1.3248161826354898,2.2919066737377372) node {$k+3$};
\draw [fill=black] (-2.,3.) circle (1.5pt);
\draw [fill=black] (-0.2679491924311225,3.) circle (1.5pt);
\draw[color=black] (-0.14608729846064736,3.213043649230324) node {$k+4$};
\draw [fill=black] (-3.7320508075688785,3.) circle (1.5pt);
\draw[color=black] (-3.9339127015393545,3.2395319387623434) node {$5$};
\draw [fill=black] (-3.732050807568879,2.) circle (1.5pt);
\draw[color=black] (-3.6028090823891175,1.8488967383313488) node {$4$};
\draw [fill=black] (-0.26794919243112303,2.) circle (1.5pt);
\draw[color=black] (-0.4374584833128556,1.7488967383313488) node {$n-3$};
\draw [fill=black] (-4.598076211353318,2.5) circle (1.5pt);
\draw[color=black] (-4.78153796656396,2.7494985824199927) node {$3$};
\draw [fill=black] (-4.598076211353318,1.5) circle (1.5pt);
\draw[color=black] (-4.463678492179733,1.345619237222989) node {$2$};
\draw [fill=black] (-5.464101615137757,2.) circle (1.5pt);
\draw[color=black] (-5.655651521120585,2.1270237784175476) node {$1$};
\draw [fill=black] (0.5980762113533165,2.5) circle (1.5pt);
\draw[color=black] (0.7280262560959774,2.709766148121964) node {$n-2$};
\draw [fill=black] (0.5980762113533152,1.5) circle (1.5pt);
\draw[color=black] (0.4234109264777597,1.2721075267550078) node {$n-1$};
\draw [fill=black] (1.464101615137755,2.) circle (1.5pt);
\draw[color=black] (1.628628100184621,2.0475589098214906) node {$n$};
\end{scriptsize}
\end{tikzpicture}
\end{center}
\caption{A linear 2-tree with $n$ vertices and single bend at vertex $k$.}
\label{fig:bent}
\end{figure}
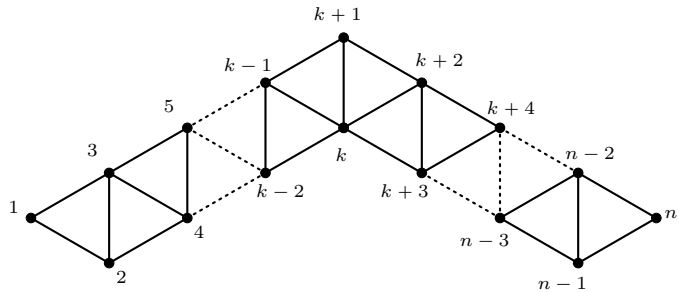
In essence a bent linear 2 tree differs from a straight linear 2 tree in that vertex $k$ has degree 5, vertex $k+1$ has degree $3$ and all other vertices have degrees as before.
Our main result is: 

\begin{main1} 
 Given a bent linear 2-tree with $n$ vertices, $m = n-2$ triangles, and one bend located at vertex $k$, the resistance distance between node 1 and node $n$ is
	 \begin{equation}r_{m,k}(1,n)=\frac{m+1}{5}+\frac{4F_{m+1}}{5L_{m+1}}+\frac{\sum_{j=3}^{k}\left[(-1)^jF_{m-2j+3}(F_{m+2}+F_{j-2}F_{m-j+1})\right]}{F_{2m+2}},\end{equation}\noindent where $F_p$ is the $p$th Fibonacci number and $L_q$ is the $q$th Lucas number. 
\end{main1}

In Section~\ref{sec:note} we explain our main tool, the $\Delta$--Y transform, and cite important Fibonacci and Lucas number identities that will be used in the paper. In Section 3 we show our main result.

\section{Transformations and Necessary Identities}\label{sec:note}

In this paper we will determine the effective resistance between the extreme vertices of a bent linear 2--tree by performing a sequence of $\Delta$--Y transformations, and using the standard rules for circuits in series and parallel (instead of using the Moore-Penrose pseudo inverse of the graph Laplacian).  A $\Delta$--Y transformation is a mathematical technique to convert resistors in a triangle ($\Delta$) formation to an equivalent system of three resistors in a ``Y'' format as illustrated in Figure~\ref{fig:dy}.  We formalize this transformation below. 
\begin{definition}[$\Delta-Y$ transformation]\label{def:dy}
Let $N_1, N_2, N_3$ be nodes and $R_A$, $R_B$ and $R_C$ be given resistances as shown in Figure~\ref{fig:dy}.  The equivalent circuit in the ``Y'' format as shown in Figure~\ref{fig:dy} has the following resistances:
\begin{align}
  R_1 &= \frac{R_BR_C}{R_A + R_B + R_C} \\
  R_2 &= \frac{R_AR_C}{R_A + R_B + R_C} \\
  R_3 &= \frac{R_AR_B}{R_A + R_B + R_C}
\end{align}
\end{definition}
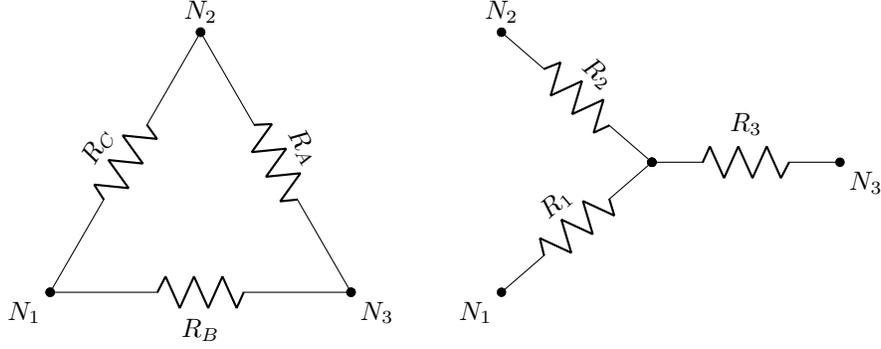
\begin{figure}
\begin{center}
\begin{circuitikz} \draw
 (0,0) to[R, *-*,a=$R_B$] (4,0);
 \draw (0,0) to[R, *-*,l=$R_C$] (2,3.46);
 \draw (2,3.46) to[R, *-*,l=$R_A$] (4,0)
 {[anchor=north east] (0,0) node {$N_1$} } {[anchor=north west] (4,0) node {$N_3$}} {[anchor=south]  (2,3.46) node {$N_2$}};
 \draw (6,0) to [R, *-*,a=$R_1$] (8,1.73);
 \draw (6,3.46) to [R, *-*,a=$R_2$] (8,1.73);
\draw (10.5,1.73) to [R, *-*,a=$R_3$] (8,1.73)
 {[anchor=north east] (6,0) node {$N_1$} } {[anchor=north west] (10.5,1.73) node {$N_3$}} {[anchor=south]  (6,3.46) node {$N_2$}};
%   \draw (4,0) to [R] (4,4);
%    --
%  to[R] (1,1)   to[R] (0,2) --
% (3,1) to[R] (2,2)
; 
\end{circuitikz}
\end{center}
\caption{$\Delta$ and $Y$ circuits with vertices labeled as in Definition~\ref{def:dy}.}
\label{fig:dy}
\end{figure}

\subsection{Fibonacci and Lucas Identities}
Both Fibonacci and Lucas numbers are important to the proof of our main results.  By $F_n$ we denote the $n$th Fibonacci number and by $L_m$ we denote the $m$th Lucas number.
\begin{proposition} $F_{2m}=L_mF_m.$\end{proposition}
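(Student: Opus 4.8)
The plan is to use the Binet (closed-form) expressions for the Fibonacci and Lucas numbers. Set $\phi = \tfrac{1+\sqrt5}{2}$ and $\psi = \tfrac{1-\sqrt5}{2}$, the two roots of $x^2 = x+1$; these satisfy $\phi + \psi = 1$ and $\phi\psi = -1$, so in particular $\phi - \psi = \sqrt5 \neq 0$. Recall the standard identities $F_n = \dfrac{\phi^n - \psi^n}{\phi - \psi}$ and $L_n = \phi^n + \psi^n$, each of which is verified by checking the two initial values and observing that both sides satisfy the same second-order linear recurrence $a_{n} = a_{n-1} + a_{n-2}$.

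The main computation is then the factorization of a difference of squares. First I would write
\[
L_m F_m = \bigl(\phi^m + \psi^m\bigr)\cdot\frac{\phi^m - \psi^m}{\phi - \psi} = \frac{(\phi^m)^2 - (\psi^m)^2}{\phi - \psi} = \frac{\phi^{2m} - \psi^{2m}}{\phi - \psi}.
\]
Next I would recognize the right-hand side as exactly the Binet expression for $F_{2m}$, giving $L_m F_m = F_{2m}$, which is the claim.

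There is essentially no obstacle here: the only point requiring any care is ensuring the Binet formulas are available (or, if the paper prefers a self-contained treatment, supplying the one-line induction that establishes them), and noting $\phi - \psi \neq 0$ so the division is legitimate. An alternative, if one wishes to avoid irrational numbers entirely, is a short induction on $m$: the base cases $m=1$ ($F_2 = 1 = L_1 F_1$) and $m=2$ ($F_4 = 3 = L_2 F_2$) are immediate, and the inductive step follows from $F_{2m+2} = F_{2m+1} + F_{2m}$ together with the recurrences for $F$ and $L$ and the identity $F_{2m+1} = F_{m+1}^2 + F_m^2$ (itself provable by the same Binet argument or by induction). I would present the Binet proof as the primary one for brevity.
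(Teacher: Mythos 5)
Your proof is correct: the Binet-formula computation $L_mF_m = (\phi^m+\psi^m)\frac{\phi^m-\psi^m}{\phi-\psi} = \frac{\phi^{2m}-\psi^{2m}}{\phi-\psi} = F_{2m}$ is a complete and standard argument, and your fallback induction is also sound. The paper itself offers no proof of this proposition, deferring entirely to the reference (Vajda), so there is no in-paper argument to compare against; your self-contained derivation is exactly the sort of one-line verification that reference would supply.
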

\begin{proof}
For the proof see~\cite{Vajda}.%
\end{proof}
\begin{proposition}\label{prop:splitsum} $F_{k+m}=F_mF_{k+1}+F_{m-1}F_k$.\end{proposition}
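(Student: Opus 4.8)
The plan is to prove the identity by induction on $m$, holding $k$ fixed, using nothing more than the defining recurrence $F_{p}=F_{p-1}+F_{p-2}$ together with the initial values $F_0=0$, $F_1=1$ (and, for whatever small or shifted indices are needed when the identity is invoked in the main proof, the standard extension $F_{-1}=1$, which is forced by the recurrence). This is the most self-contained route and keeps the bookkeeping transparent.

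First I would dispose of the two base cases. For $m=0$ the right-hand side is $F_0F_{k+1}+F_{-1}F_k = 0\cdot F_{k+1}+1\cdot F_k = F_k = F_{k+0}$, and for $m=1$ it is $F_1F_{k+1}+F_0F_k = F_{k+1}$, both as claimed. For the inductive step, assume the formula holds with $m$ replaced by $m-1$ and by $m$. Then
\begin{align*}
F_{k+(m+1)} &= F_{k+m}+F_{k+m-1}\\
&= \bigl(F_mF_{k+1}+F_{m-1}F_k\bigr) + \bigl(F_{m-1}F_{k+1}+F_{m-2}F_k\bigr)\\
&= (F_m+F_{m-1})F_{k+1} + (F_{m-1}+F_{m-2})F_k\\
&= F_{m+1}F_{k+1}+F_mF_k,
\end{align*}
which is precisely the asserted identity with $m$ replaced by $m+1$. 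This closes the induction.

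An alternative I would mention in one line is the matrix argument: writing $Q=\left(\begin{smallmatrix}1&1\\1&0\end{smallmatrix}\right)$, one has $Q^{n}=\left(\begin{smallmatrix}F_{n+1}&F_n\\F_n&F_{n-1}\end{smallmatrix}\right)$, and reading off the $(1,2)$ entry of $Q^{k+m}=Q^kQ^m$ yields the formula at once; this trades the base-case check for the (equally routine) verification of the matrix power formula. In either form the argument is entirely elementary, and the only thing one must be careful about is index conventions — ensuring the base cases and the negative-index value $F_{-1}=1$ are consistent with the paper's indexing, since the identity is later applied for a range of indices, some of them small. That consistency check is the sole "obstacle," and it is minor.
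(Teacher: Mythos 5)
Your induction is correct: the two base cases $m=0$ and $m=1$ are right (given the standard convention $F_{-1}=1$), and the inductive step is a clean application of the recurrence, since
\begin{align*}
F_{k+m+1} &= F_{k+m}+F_{k+m-1} = (F_m+F_{m-1})F_{k+1}+(F_{m-1}+F_{m-2})F_k = F_{m+1}F_{k+1}+F_mF_k .
\end{align*}
The paper itself offers no argument here at all --- it simply cites Benjamin and Quinn --- so your proof is not so much a different route as the only actual proof on the table; it buys self-containedness at the cost of a few lines, and the matrix identity $Q^{k+m}=Q^kQ^m$ you mention is an equally standard alternative. The one point worth tightening is the one you already flag: a forward induction on $m$ only establishes the identity for $m\ge 0$, whereas the paper later invokes it (e.g.\ in Lemma~3.5, where the index $m-2k+1$ can be negative) for arbitrary integer indices. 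Since the recurrence $F_{p-2}=F_p-F_{p-1}$ runs backwards just as well, a one-line downward induction (or an appeal to $F_{-n}=(-1)^{n+1}F_n$) extends the identity to all integers $m$ and $k$; stating that explicitly would close the only gap between what you prove and what the paper uses.
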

\begin{proof}
For the proof see~\cite{BQ}.%
\end{proof}
\begin{corollary} $F_{2m}=F_mF_{m+1}+F_{m-1}F_m.$\end{corollary}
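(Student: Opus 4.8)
The plan is to obtain this identity as an immediate specialization of Proposition~\ref{prop:splitsum}. That proposition states $F_{k+m} = F_m F_{k+1} + F_{m-1} F_k$ for all valid indices; setting $k = m$ gives $F_{2m} = F_m F_{m+1} + F_{m-1} F_m$ directly, with no additional argument required. Since the corollary is a pure substitution instance of an already-cited result, there is no genuine obstacle to overcome — the only thing worth stating is which value of $k$ to plug in.

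If one preferred a self-contained argument instead of quoting Proposition~\ref{prop:splitsum}, two routine alternatives are available. First, a short induction on $m$: the base cases $m = 1, 2$ are checked by hand, and the inductive step follows by expanding $F_{2(m+1)} = F_{2m+2}$ via the recurrence $F_{2m+2} = F_{2m+1} + F_{2m}$ and a companion identity for $F_{2m+1}$. Second, one can use the matrix form $\begin{pmatrix} F_{n+1} & F_n \\ F_n & F_{n-1} \end{pmatrix} = \begin{pmatrix} 1 & 1 \\ 1 & 0 \end{pmatrix}^{n}$ and read off the $(1,2)$ entry of the square of this matrix at $n = m$. Both of these are strictly more work than the one-line specialization, so the substitution approach is the one I would present.

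As a consistency check I would verify a small case, e.g. $m = 3$: $F_6 = 8$ and $F_3 F_4 + F_2 F_3 = 2\cdot 3 + 1 \cdot 2 = 8$, which agrees. I would then record the proof as the single sentence invoking Proposition~\ref{prop:splitsum} with $k = m$, since that is the cleanest and requires no machinery beyond what the excerpt has already established.
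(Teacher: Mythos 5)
Your proposal is correct and matches the paper's (implicit) intent exactly: the corollary is stated immediately after Proposition~\ref{prop:splitsum} precisely because it is the substitution instance $k=m$ of $F_{k+m}=F_mF_{k+1}+F_{m-1}F_k$, and no further argument is given or needed. The alternative inductive and matrix arguments you mention are fine but unnecessary, as you note.
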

\begin{corollary} $F_{2m-3}=F_{m-1}^2+F^2_{m-2}.$\end{corollary}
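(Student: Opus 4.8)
The plan is to obtain the identity as a one-line specialization of Proposition~\ref{prop:splitsum}. Rewriting that proposition with fresh index names as $F_{a+b} = F_b\,F_{a+1} + F_{b-1}\,F_a$, the idea is to split the target index as $2m-3 = (m-2) + (m-1)$ and apply the proposition with $a = m-2$, $b = m-1$. This yields
\[
F_{2m-3} \;=\; F_{(m-2)+(m-1)} \;=\; F_{m-1}\,F_{(m-2)+1} + F_{(m-1)-1}\,F_{m-2} \;=\; F_{m-1}\,F_{m-1} + F_{m-2}\,F_{m-2},
\]
which is precisely $F_{m-1}^2 + F_{m-2}^2$, as claimed. (Note that the companion choice $a = m-1$, $b = m-2$ would instead produce $F_{2m-3} = F_{m-2}F_m + F_{m-3}F_{m-1}$, a valid but different identity; the assignment above is the one that collapses to a sum of two squares.)

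There is essentially nothing to obstruct this. The only point worth a remark is that the index shift be admissible, i.e.\ that $m$ be large enough for $F_{m-2}$ to be a genuine term of the sequence (say $m \ge 2$, with $F_0 = 0$, $F_1 = 1$); this holds in every application of the corollary in this note, where $m = n-2$ is large. If one wished to avoid invoking Proposition~\ref{prop:splitsum}, the identity also follows by a routine induction on $m$ using the Fibonacci recurrence, but routing the argument through the splitting identity is shorter and keeps the note self-contained, so I would present the one-line derivation above.
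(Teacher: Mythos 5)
Your derivation is correct and matches the paper's intent: the corollary is stated immediately after Proposition~\ref{prop:splitsum} precisely so that it follows from the specialization you give (splitting $2m-3=(m-2)+(m-1)$ so that both products collapse to squares). Nothing further is needed.
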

\begin{proposition}$F_{n + m} = F_{n + 1}F_{m + 1} - F_{n - 1}F_{m - 1}$\end{proposition}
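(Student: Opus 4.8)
The plan is to derive this identity from Proposition~\ref{prop:splitsum} together with the Fibonacci recurrence $F_{p+1}=F_p+F_{p-1}$, so that no new machinery is needed. First I would apply Proposition~\ref{prop:splitsum} with $k$ replaced by $n$, which gives $F_{n+m}=F_mF_{n+1}+F_{m-1}F_n$. This already has the right ``shape'': it is a sum of two products of Fibonacci numbers whose indices add to $n+m$, and all that remains is to rewrite it so that the indices appearing are $n+1,m+1$ and $n-1,m-1$.

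The key step is a single substitution: replace the factor $F_n$ by $F_{n+1}-F_{n-1}$, i.e.\ the recurrence read backwards. This yields
\[
F_{n+m}=F_mF_{n+1}+F_{m-1}\bigl(F_{n+1}-F_{n-1}\bigr)=\bigl(F_m+F_{m-1}\bigr)F_{n+1}-F_{m-1}F_{n-1}.
\]
Applying the recurrence once more in the form $F_m+F_{m-1}=F_{m+1}$ collapses the first term and produces $F_{n+m}=F_{n+1}F_{m+1}-F_{n-1}F_{m-1}$, which is exactly the claimed identity. As a consistency check one observes that the right-hand side is manifestly symmetric under $n\leftrightarrow m$, as it must be since the left-hand side is; an alternative derivation that makes this symmetry transparent from the start is to expand $F_{n+1}F_{m+1}=(F_n+F_{n-1})(F_m+F_{m-1})$, subtract $F_{n-1}F_{m-1}$, and recognize the remaining three-term sum $F_nF_m+F_nF_{m-1}+F_{n-1}F_m$ as $F_m(F_n+F_{n-1})+F_{m-1}F_n=F_mF_{n+1}+F_{m-1}F_n$, again Proposition~\ref{prop:splitsum}.

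The only point requiring a moment's care — and it is the closest thing to an obstacle here — is the bookkeeping for small or nonpositive indices: in the later proof we will invoke this identity with shifted arguments (expressions such as $F_{m-2j+3}$ occur in Theorem~\ref{thm:bend}), so for the manipulation above to be legitimate across the whole range of interest one should fix the usual convention $F_0=0$, $F_{-1}=1$, $F_{-p}=(-1)^{p+1}F_p$, under which both Proposition~\ref{prop:splitsum} and the recurrence $F_{p+1}=F_p+F_{p-1}$ persist for all integers. With that convention in place the two-line computation is unconditional. (Since the identity is classical, one could alternatively simply cite a reference such as~\cite{Vajda}.)
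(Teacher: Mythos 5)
Your derivation is correct: applying Proposition~\ref{prop:splitsum} with $k=n$ gives $F_{n+m}=F_mF_{n+1}+F_{m-1}F_n$, and substituting $F_n=F_{n+1}-F_{n-1}$ and then $F_m+F_{m-1}=F_{m+1}$ yields the claim in two lines. The paper itself offers no argument at all for this identity --- it simply cites~\cite{BQ} --- so your proof is not so much a different route as the only actual route on the table: a short, self-contained deduction from an identity already stated (and likewise only cited) earlier in the paper. What your approach buys is that this proposition is no longer an independent imported fact but a corollary of Proposition~\ref{prop:splitsum}; what the citation buys is brevity and deference to a standard source. Your closing remark about extending to all integer indices via $F_{-p}=(-1)^{p+1}F_p$ is apt and consistent with the convention the paper itself invokes later (in the proof of the lemma preceding Lemma~\ref{lem34}), though strictly speaking the proposition as used in the paper only needs the nonnegative range.
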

\begin{proof}
For the proof see~\cite{BQ}.%
\end{proof}
\begin{corollary}$F_{2m} = F_{m+1}^2-F_{m - 1}^2$\end{corollary}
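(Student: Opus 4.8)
The plan is to obtain this identity as an immediate specialization of the proposition stated just above it, namely $F_{n+m} = F_{n+1}F_{m+1} - F_{n-1}F_{m-1}$. Setting $n = m$ in that identity gives $F_{m+m} = F_{m+1}F_{m+1} - F_{m-1}F_{m-1}$, which is exactly $F_{2m} = F_{m+1}^2 - F_{m-1}^2$. Because the general two-index identity is assumed (it is cited to~\cite{BQ}), there is essentially nothing left to prove beyond this substitution, so the corollary is one line.

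As an independent check that ties the result to the other identities already collected in this subsection, I would also note a factorization argument. Writing the right-hand side as a difference of squares gives
\begin{equation}
F_{m+1}^2 - F_{m-1}^2 = (F_{m+1} - F_{m-1})(F_{m+1} + F_{m-1}).
\end{equation}
The Fibonacci recurrence $F_{m+1} = F_m + F_{m-1}$ yields $F_{m+1} - F_{m-1} = F_m$, and the standard Lucas relation $F_{m+1} + F_{m-1} = L_m$ identifies the second factor. Hence $F_{m+1}^2 - F_{m-1}^2 = F_m L_m$, and applying the first proposition of this subsection, $F_{2m} = L_m F_m$, recovers the claim.

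There is no real obstacle here, since the corollary is a direct consequence of a result that is assumed. The only point requiring a word of care is that both arguments treat $m$ as an arbitrary positive integer and rely on the indexing conventions used throughout (so that $F_{m-1}$ and $L_m$ are well defined for the relevant range of $m$); once those conventions are fixed, the substitution $n = m$ is valid without restriction, and I would present the first (one-line) derivation as the proof, optionally remarking on the factorization as an alternative.
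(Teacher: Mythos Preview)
Your proposal is correct and matches the paper's approach: the corollary is stated immediately after the proposition $F_{n+m}=F_{n+1}F_{m+1}-F_{n-1}F_{m-1}$ with no separate proof, so the intended argument is precisely your one-line substitution $n=m$. The factorization remark is a nice extra but unnecessary here.
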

%\begin{proof}
%For the proof see~\cite{BQ}.%
%\end{proof}
\begin{proposition}$F_{m+3}^2+F_m^2 = 2\left(F_{m+1}^2+F_{m +2}^2\right)$\end{proposition}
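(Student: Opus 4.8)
The plan is to push everything down to the two consecutive Fibonacci numbers $F_{m+1}$ and $F_{m+2}$ using nothing but the defining recurrence, and then let the cross terms cancel. First I would record the two rearrangements of $F_{p+1}=F_p+F_{p-1}$ that I need,
\[
F_{m+3}=F_{m+2}+F_{m+1}, \qquad F_m=F_{m+2}-F_{m+1},
\]
the first being the recurrence at index $m+3$ and the second the recurrence at index $m+2$ solved for $F_m$.

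Next I would square each of these and add. Squaring gives $F_{m+3}^2=F_{m+2}^2+2F_{m+1}F_{m+2}+F_{m+1}^2$ and $F_m^2=F_{m+2}^2-2F_{m+1}F_{m+2}+F_{m+1}^2$, so upon adding, the mixed terms $\pm 2F_{m+1}F_{m+2}$ cancel and we are left with
\[
F_{m+3}^2+F_m^2 = 2F_{m+2}^2+2F_{m+1}^2 = 2\bigl(F_{m+1}^2+F_{m+2}^2\bigr),
\]
which is precisely the asserted identity. Equivalently, this is just the parallelogram law $(a+b)^2+(a-b)^2=2a^2+2b^2$ specialized to $a=F_{m+2}$, $b=F_{m+1}$, since then $a+b=F_{m+3}$ and $a-b=F_m$; I would probably phrase the proof that way for brevity.

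There is essentially no obstacle here. The only point that deserves a word is the validity of $F_m=F_{m+2}-F_{m+1}$: this is the Fibonacci recurrence rearranged and holds for all $m\ge 1$, and in fact for all integer indices under the standard extension of $F_p$ to negative $p$, which is the range in which this identity is invoked later in the paper. If one preferred to avoid even that observation, the identity also follows by a one-step induction on $m$, or by substituting the Binet formula $F_p=(\varphi^p-\psi^p)/\sqrt5$ and simplifying; but the two-line telescoping computation above is the cleanest and is the one I would present.
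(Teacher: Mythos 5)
Your proof is correct. Note that the paper itself does not prove this identity at all---it simply cites Benjamin and Quinn---so there is no internal argument to compare against. Your two-line derivation, writing $F_{m+3}=F_{m+2}+F_{m+1}$ and $F_m=F_{m+2}-F_{m+1}$ and invoking the parallelogram law $(a+b)^2+(a-b)^2=2(a^2+b^2)$ with $a=F_{m+2}$, $b=F_{m+1}$, is a complete and self-contained verification, and your remark that the rearranged recurrence remains valid under the standard extension to negative indices covers the range in which the identity might be invoked. The phrasing via the parallelogram law is the cleanest way to present it, exactly as you suggest.
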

\begin{proof}
For the proof see~\cite{BQ}.%
\end{proof}
\begin{proposition}$3F_{m} = F_{m+2}+F_{m - 2}$\end{proposition}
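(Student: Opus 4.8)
The plan is to reduce the identity to the basic Fibonacci recurrence $F_p=F_{p-1}+F_{p-2}$, so that no induction is needed. First I would rewrite the two outer terms in terms of $F_m$ and $F_{m-1}$: one application of the recurrence gives $F_{m+2}=F_{m+1}+F_m$, while rearranging $F_m=F_{m-1}+F_{m-2}$ gives $F_{m-2}=F_m-F_{m-1}$. Adding these,
\[
F_{m+2}+F_{m-2}=F_{m+1}+2F_m-F_{m-1}.
\]

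Next I would simplify the right-hand side with the recurrence once more, in the form $F_{m+1}-F_{m-1}=F_m$ (immediate from $F_{m+1}=F_m+F_{m-1}$). Substituting yields $F_{m+2}+F_{m-2}=F_m+2F_m=3F_m$, which is the claim. Equivalently, one can invoke Proposition~\ref{prop:splitsum} with its parameter $m$ equal to $2$, giving $F_{m+2}=F_2F_{m+1}+F_1F_m=F_{m+1}+F_m$, and combine it with $F_{m-2}=F_m-F_{m-1}$; or one can simply cite a standard reference such as~\cite{Vajda} or~\cite{BQ}.

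There is essentially no obstacle in this argument. The only points requiring a moment's care are the index bookkeeping and, if the identity is wanted for all integers $m$ (not just $m\ge 2$), a one-line remark that the computation remains valid under the usual negative-index convention $F_{-p}=(-1)^{p+1}F_p$.
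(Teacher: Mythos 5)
Your argument is correct: two applications of the Fibonacci recurrence give $F_{m+2}+F_{m-2}=F_{m+1}+2F_m-F_{m-1}$, and a third (in the form $F_{m+1}-F_{m-1}=F_m$) collapses this to $3F_m$. The paper itself offers no derivation here --- it simply cites a standard reference for the identity --- so your proof is strictly more self-contained than what appears in the text. What your version buys is transparency and independence from the literature; what the citation buys is brevity, which is all the authors need since the identity is a textbook fact used only as a computational tool later on. Your closing remark about the negative-index convention $F_{-p}=(-1)^{p+1}F_p$ is also apt: the recurrence $F_p=F_{p-1}+F_{p-2}$ persists for all integers under that convention, so the identity holds for every integer $m$, which matters because the paper does invoke negative-index Fibonacci numbers elsewhere (e.g., in the proof of the lemma following Proposition~\ref{prop:Melham}). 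No gaps.
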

\begin{proof}
For the proof see~\cite{BQ}.%
\end{proof}

%\begin{proposition}\label{prop:wiki}[Catalan's Identity]$F_{n}^2-F_{n+r}F_{n-r} = (-1)^{n-r}F_r^2$\end{proposition}
%\begin{proof}
%For the proof see~\cite{Vajda}.%
%\end{proof}
%\begin{corollary} $F_{n+1}F_{n-1}-F_{n+2}F_{n-2}=(-1)^n2$.\end{corollary}
%\begin{proof}
%From Proposition~\ref{prop:wiki} setting $r=2$ and $r=1$ respectively yields
%\[F_{n}^2-F_{n+2}F_{n-2} = (-1)^{n-2}F_2^2 = (-1)^{n}\]
%and
%\[F_{n}^2-F_{n+1}F_{n-1} = (-1)^{n-1}F_1^2 = (-1)^{n-1}.\]
%However $F_{n}^2-F_{n+1}F_{n-1} = (-1)^{n-1}$ implies $F_{n+1}F_{n-1}-F_{n}^2 =  (-1)^{n}.$
%Summing the two yields the desired equality.
%\end{proof}
\begin{proposition} $L_m=F_{m+1}+F_{m-1}.$\end{proposition}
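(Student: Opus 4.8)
The plan is to prove $L_m = F_{m+1} + F_{m-1}$ by induction on $m$, using that the Lucas and Fibonacci sequences satisfy the same second-order recurrence $x_m = x_{m-1} + x_{m-2}$. First I would check the base cases under the standard indexing $F_0 = 0$, $F_1 = F_2 = 1$, $L_1 = 1$, $L_2 = 3$: for $m = 1$ we have $F_2 + F_0 = 1 + 0 = 1 = L_1$, and for $m = 2$ we have $F_3 + F_1 = 2 + 1 = 3 = L_2$.

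For the inductive step, assuming the identity holds at $m-1$ and $m-2$, I would write
\[
L_m = L_{m-1} + L_{m-2} = (F_m + F_{m-2}) + (F_{m-1} + F_{m-3}) = F_{m+1} + F_{m-1},
\]
where the first equality is the Lucas recurrence, the second is the inductive hypothesis at the two smaller indices, and the last follows from the Fibonacci recurrence after regrouping the four terms as $(F_m + F_{m-1}) + (F_{m-2} + F_{m-3})$. This completes the induction.

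If a closed-form argument is preferred, an alternative is to apply Binet's formulas $F_m = (\phi^m - \psi^m)/\sqrt5$ and $L_m = \phi^m + \psi^m$, with $\phi = (1+\sqrt5)/2$ and $\psi = (1-\sqrt5)/2$. From $\phi^2 = \phi + 1$ one gets $\phi^2 + 1 = \sqrt5\,\phi$, and similarly $\psi^2 + 1 = -\sqrt5\,\psi$, so that
\[
F_{m+1} + F_{m-1} = \frac{\phi^{m-1}(\phi^2 + 1) - \psi^{m-1}(\psi^2 + 1)}{\sqrt5} = \frac{\sqrt5\,\phi^m + \sqrt5\,\psi^m}{\sqrt5} = \phi^m + \psi^m = L_m.
\]

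I do not expect any real obstacle; this is a classical identity and either route is routine. The only mild subtlety is fixing the indexing conventions for $F$ and $L$ (and, if the statement is meant to include $m = 1$, reading $F_0 = 0$ in agreement with the recurrence) so that the base cases of the induction line up. In keeping with the style of the neighboring propositions, it would also be entirely reasonable to omit the proof and simply cite a standard source such as \cite{Vajda} or \cite{BQ}.
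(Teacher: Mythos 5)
Your argument is correct: both the induction (with the regrouping $(F_m+F_{m-1})+(F_{m-2}+F_{m-3})=F_{m+1}+F_{m-1}$) and the Binet computation check out, and the base cases are consistent with the standard indexing. The paper itself does not prove this proposition at all --- its ``proof'' is simply the citation to \cite{Vajda} that you anticipated in your closing remark --- so you have supplied strictly more than the paper does. The induction is the more elementary and self-contained of your two routes; the Binet argument generalizes more readily to other linear combinations of $F$ and $L$; the paper's citation-only approach is the stylistically consistent choice here, since this identity is entirely standard and plays only a supporting role in the resistance-distance computation. Any of the three would be acceptable.
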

\begin{proof}
For the proof see~\cite{Vajda}.%7-b
\end{proof}
\begin{proposition} $2F_{m+1}=F_m+L_{m}.$\end{proposition}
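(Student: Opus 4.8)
The plan is to reduce the claim directly to the immediately preceding proposition, namely $L_m = F_{m+1} + F_{m-1}$, together with the defining Fibonacci recurrence $F_{m+1} = F_m + F_{m-1}$. Starting from the right-hand side, I would write $F_m + L_m = F_m + (F_{m+1} + F_{m-1})$, regroup as $F_{m+1} + (F_m + F_{m-1})$, and then apply the recurrence to the parenthesized sum, obtaining $F_{m+1} + F_{m+1} = 2F_{m+1}$, which is exactly the left-hand side. This is a single line of algebra resting on an identity already established, so there is essentially no obstacle.

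The only point requiring a moment's attention is the index range: the step uses $F_{m+1} = F_m + F_{m-1}$, so $F_{m-1}$ must be defined, which is unproblematic for all values of $m$ that occur in the paper (and, with the usual extension $F_{-1} = 1$, $F_0 = 0$, for all integers $m$). One could equally well give a Binet-formula proof, writing $F_m = (\phi^m - \psi^m)/\sqrt{5}$ and $L_m = \phi^m + \psi^m$ with $\phi = (1+\sqrt{5})/2$ and $\psi = (1-\sqrt{5})/2$, and using $2\phi/\sqrt{5} = 1/\sqrt{5} + 1$ and $2\psi/\sqrt{5} = 1/\sqrt{5} - 1$ to match terms; but this is strictly more work than the recurrence argument.

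Since the neighboring propositions in this subsection are simply cited to \cite{Vajda} or \cite{BQ}, a legitimate alternative is to do the same here. However, because the derivation from the previous proposition is so short, I would include it explicitly rather than cite, both for self-containedness and because it makes clear that no new input beyond the Fibonacci recurrence is needed.
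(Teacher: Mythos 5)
Your derivation is correct: writing $F_m + L_m = F_m + (F_{m+1} + F_{m-1}) = F_{m+1} + (F_m + F_{m-1}) = 2F_{m+1}$ uses only the preceding proposition $L_m = F_{m+1} + F_{m-1}$ and the Fibonacci recurrence, and every step is valid (for all integers $m$ under the standard extension of $F$ to negative indices). The paper itself does not give an argument here --- it simply cites Vajda, as it does for the neighboring identities --- so your explicit one-line proof is a self-contained substitute rather than a divergence from the paper's method; there is nothing to object to, and your remark that citation would also have been acceptable matches the paper's actual practice.
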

\begin{proof}
For the proof see~\cite{Vajda}.%7-b
\end{proof}
\begin{proposition} $L_{m+1}=2F_m+F_{m+1}.$\end{proposition}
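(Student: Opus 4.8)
The plan is to obtain this identity as an immediate corollary of material already recorded above, so the argument will be very short. The two ingredients I would use are the relation $L_m = F_{m+1} + F_{m-1}$ (the penultimate proposition) together with the defining Fibonacci recurrence $F_{m+1} = F_m + F_{m-1}$.

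First I would shift the index in $L_m = F_{m+1} + F_{m-1}$, replacing $m$ by $m+1$, to obtain $L_{m+1} = F_{m+2} + F_m$. Then, expanding $F_{m+2} = F_{m+1} + F_m$ via the recurrence gives $L_{m+1} = F_{m+1} + F_m + F_m = 2F_m + F_{m+1}$, which is exactly the claimed identity. One could equivalently package this through the preceding proposition $2F_{m+1} = F_m + L_m$: since $L_{m+1} = L_m + F_{m+1}$ would need justification anyway, the route through $L_m = F_{m+1}+F_{m-1}$ is the cleanest.

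An alternative, fully self-contained route is a one-line induction on $m$: the base cases $m = 0$ and $m = 1$ are verified directly ($L_1 = 1 = 2F_0 + F_1$ and $L_2 = 3 = 2F_1 + F_2$), and the inductive step follows by adding the identity for $m-1$ and $m-2$ and invoking the Fibonacci and Lucas recurrences. Either way there is no genuine obstacle here; the statement is a routine consequence of the two recurrences, and the only reason to isolate it is that it is invoked repeatedly in the proof of Theorem~\ref{thm:bend} in Section 3. As with the neighboring propositions, one may alternatively simply cite~\cite{Vajda}.
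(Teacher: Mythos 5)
Your derivation is correct: shifting $L_m=F_{m+1}+F_{m-1}$ to $L_{m+1}=F_{m+2}+F_m$ and expanding $F_{m+2}=F_{m+1}+F_m$ immediately gives $L_{m+1}=2F_m+F_{m+1}$, and your induction variant with base cases $L_1=1=2F_0+F_1$ and $L_2=3=2F_1+F_2$ also checks out. The paper, however, does not prove this proposition at all; like the neighboring identities, its ``proof'' consists solely of the citation ``For the proof see~\cite{BQ}.'' So your approach is genuinely different only in the sense that you supply an actual argument where the authors outsource one. What your route buys is self-containment: the identity falls out in two lines from a proposition already recorded in the same subsection ($L_m=F_{m+1}+F_{m-1}$) plus the defining recurrence, so no external reference is needed. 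What the paper's route buys is brevity and consistency with how it handles the whole block of standard Fibonacci--Lucas identities. Your closing remark that one could equally cite the literature aligns with what the paper in fact does; there is no gap in your argument.
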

\begin{proof}
For the proof see~\cite{BQ}.%
\end{proof}
%\begin{proposition}\label{fib:id} $F_{n + i} L_{n + k} - F_n L_{n + i + k} = (-1)^n F_i L_k$\end{proposition}
%\begin{proof}
%For the proof see~\cite{Vajda}.%
%\end{proof}
%\begin{corollary} $F_p L_{p-2} - F_{p-1} L_{p-1} = (-1)^{p}$\end{corollary}
%\begin{proof}
%Letting $n=p-1$, $i=1$ and $k=-1$ and applying Proposition~\ref{fib:id} yields the desired result.
%\end{proof}
\begin{proposition}\label{prop:Melham} For integers $a,b,c$, and $n$ the following relationship holds
\[F_{n+a+b+c}F_{n-a}F_{n-b}F_{n-c} - F_{n-a-b-c}F_{n+a}F_{n+b}F_{n+c}=(-1)^{n+a+b+c}F_{a+b}F_{a+c}F_{b+c}F_{2n}.\]
\end{proposition}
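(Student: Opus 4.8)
My plan is to prove this purely Fibonacci identity directly from Binet's formula, using two structural symmetries to keep the otherwise large expansion finite and transparent. Write $F_k=(\alpha^k-\beta^k)/(\alpha-\beta)$ with $\alpha=\tfrac{1+\sqrt5}{2}$, $\beta=\tfrac{1-\sqrt5}{2}$, so that $\alpha-\beta=\sqrt5$ and, crucially, $\alpha\beta=-1$. Every term on both sides is a product of exactly four Fibonacci numbers, so each side carries the same scalar $(\alpha-\beta)^{-4}=\tfrac1{25}$; multiplying through by $25$ and abbreviating $G_k:=\alpha^k-\beta^k$, the claim becomes the equivalent statement
\begin{equation*}
G_{n+a+b+c}G_{n-a}G_{n-b}G_{n-c}-G_{n-a-b-c}G_{n+a}G_{n+b}G_{n+c}=(-1)^{n+a+b+c}G_{a+b}G_{a+c}G_{b+c}G_{2n}.
\end{equation*}

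The first simplification is an antisymmetry: with $f(a,b,c):=G_{n+a+b+c}G_{n-a}G_{n-b}G_{n-c}$ the second product on the left is exactly $f(-a,-b,-c)$, so the left-hand side is the odd part $f(a,b,c)-f(-a,-b,-c)$ under the involution $(a,b,c)\mapsto(-a,-b,-c)$, which discards every monomial invariant under this map. The second simplification uses $\alpha\beta=-1$, i.e. $\beta^{k}=(-1)^{k}\alpha^{-k}$, to collapse each Binet monomial to a signed power of the single variable $\alpha$: expanding a product of four binomials $\alpha^{p}-\beta^{p}$ produces sixteen terms, and each $\alpha^{P}\beta^{Q}$ reduces to $(-1)^{Q}\alpha^{P-Q}$, so the whole left-hand side becomes a Laurent polynomial in $\alpha$ whose exponents are the signed partial sums of the four shifts. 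The two extreme monomials $\pm\alpha^{\pm 4n}$ cancel between the two products, and I would group what remains by net exponent. I then expand the right-hand side the same way, rewriting $(-1)^{n+a+b+c}$ as $(\alpha\beta)^{n+a+b+c}$, and match coefficients power by power; the relation $G_k\mapsto(-1)^{k+1}G_k$ under $\alpha\mapsto\alpha^{-1}$ pairs the coefficient of $\alpha^{e}$ with that of $\alpha^{-e}$, halving the check.

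Treating $n,a,b,c$ as formal so that the exponents $\alpha^{\lambda}$ (for distinct linear forms $\lambda$) are independent, the remaining task is purely combinatorial: verify that the surviving signed monomials on the left form exactly the same multiset as the sixteen monomials of $(\alpha\beta)^{n+a+b+c}G_{a+b}G_{a+c}G_{b+c}G_{2n}$. This coefficient bookkeeping is the main obstacle, since one must track signs and exponents across the $16+16$ left-hand terms after cancellation; as a sanity anchor one checks the extreme surviving term, which comes from choosing $\beta$ in the factor $G_{n-a-b-c}$ of the second product and yields $(-1)^{n+a+b+c}\alpha^{2n+2a+2b+2c}$, precisely the top term of the right-hand side. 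As an independent cross-check (or alternative proof) one can instead collapse the products pairwise with $5F_xF_y=L_{x+y}-(-1)^yL_{x-y}$ and $L_xL_y=L_{x+y}+(-1)^yL_{x-y}$, reducing both sides to Lucas numbers, or verify the identity by induction on $n$ using the Fibonacci recurrence once $a,b,c$ are fixed.
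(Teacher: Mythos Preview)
The paper does not prove this identity; it simply cites Melham (2011). Your Binet-formula approach is therefore a genuine alternative rather than a parallel to anything in the paper. The strategy is sound: the antisymmetry observation that the left-hand side is $f(a,b,c)-f(-a,-b,-c)$ is correct, the reduction $\beta^{k}=(-1)^{k}\alpha^{-k}$ legitimately collapses each four-fold product to a signed Laurent expression in $\alpha$ alone, and your top-term sanity check matches the leading monomial on the right. One point worth making explicit when you write it up: treating the powers $\alpha^{L(n,a,b,c)}$ for distinct integer linear forms $L$ as linearly independent is justified because $\alpha>1$ is not a root of unity, so distinct exponential characters on $\mathbb{Z}^{4}$ are linearly independent; this is what licenses the ``match coefficients power by power'' step. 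That said, your proposal is a sketch rather than a proof: you name the $16+16$ term matching as ``the main obstacle'' without carrying it out. Since the paper itself only defers to a reference, your outline already goes further; to make it self-contained, either execute the monomial bookkeeping in full or follow through on your own alternative via $5F_{x}F_{y}=L_{x+y}-(-1)^{y}L_{x-y}$ and $L_{x}L_{y}=L_{x+y}+(-1)^{y}L_{x-y}$, which keeps the term count smaller and the signs more transparent.
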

\begin{proof}
For the proof see \cite{Melham2011}.
\end{proof}
\begin{proposition}\label{prop:newprop2} $F_{n+i}F_{n+r}-F_nF_{n+i+r} = (-1)^nF_iF_r.$\end{proposition}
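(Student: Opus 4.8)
The plan is to deduce this from Binet's formula; it is the quickest route and needs nothing beyond the closed form of $F_p$. Write $\phi=(1+\sqrt5)/2$ and $\psi=(1-\sqrt5)/2$ for the roots of $x^2=x+1$, so that $\phi+\psi=1$, $\phi\psi=-1$, and $F_p=(\phi^p-\psi^p)/(\phi-\psi)$ for every integer $p$. Substituting this into $F_{n+i}F_{n+r}$ and into $F_nF_{n+i+r}$ and expanding both products, the two ``pure'' powers $\phi^{2n+i+r}$ and $\psi^{2n+i+r}$ occur with the same coefficient in each, so they cancel in the difference. What survives is the sum of the four cross terms, out of which one can factor $(\phi\psi)^n$; the remaining bracket is exactly $(\phi^i-\psi^i)(\phi^r-\psi^r)$. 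Dividing by $(\phi-\psi)^2$ and using $\phi\psi=-1$ gives $F_{n+i}F_{n+r}-F_nF_{n+i+r}=(\phi\psi)^nF_iF_r=(-1)^nF_iF_r$, which is the claim. This version is completely self-contained.

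If one prefers to stay inside the set of identities already assembled in this section, here is an alternative. Apply Proposition~\ref{prop:splitsum} three times to obtain $F_{n+i}=F_iF_{n+1}+F_{i-1}F_n$, $F_{n+r}=F_rF_{n+1}+F_{r-1}F_n$, and $F_{n+i+r}=F_{i+r}F_{n+1}+F_{i+r-1}F_n$. Expand $F_{n+i}F_{n+r}-F_nF_{n+i+r}$ and collect the coefficients of the three monomials $F_{n+1}^2$, $F_nF_{n+1}$, $F_n^2$. The coefficient of $F_{n+1}^2$ is plainly $F_iF_r$; rewriting $F_{i+r}$ and $F_{i+r-1}$ by one more use of Proposition~\ref{prop:splitsum} (with the index shifts $k=i$ and $k=i-1$ respectively) collapses the other two coefficients to $-F_iF_r$ as well. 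Hence the left-hand side equals $F_iF_r\bigl(F_{n+1}^2-F_nF_{n+1}-F_n^2\bigr)=F_iF_r\bigl(F_{n+1}F_{n-1}-F_n^2\bigr)=(-1)^nF_iF_r$ by Cassini's identity $F_{n+1}F_{n-1}-F_n^2=(-1)^n$.

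I do not anticipate a real obstacle: this is Vajda's identity, so a citation to \cite{Vajda} or \cite{BQ} would also be acceptable in place of a proof. The only points requiring a little care are bookkeeping ones: in the Binet computation, matching the four cross terms against $(\phi^i-\psi^i)(\phi^r-\psi^r)$ after pulling out $(\phi\psi)^n$; and in the second approach, applying Proposition~\ref{prop:splitsum} with the correct index shifts when reducing the $F_nF_{n+1}$ and $F_n^2$ coefficients, and having Cassini's identity on hand (it is standard, e.g.\ the value of $\det\left(\begin{smallmatrix}1&1\\1&0\end{smallmatrix}\right)^n$).
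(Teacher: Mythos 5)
Both of your arguments are correct, but neither can be compared against a proof in the paper, because the paper offers none: Proposition~\ref{prop:newprop2} is dispatched with a bare citation to \cite{Vajda}, exactly as you anticipated in your closing remark. Your Binet computation checks out --- the two pure terms $\phi^{2n+i+r}$ and $\psi^{2n+i+r}$ indeed cancel, the four cross terms factor as $(\phi\psi)^n(\phi^i-\psi^i)(\phi^r-\psi^r)$, and dividing by $(\phi-\psi)^2=5$ gives $(-1)^nF_iF_r$. Your second route also works: expanding via Proposition~\ref{prop:splitsum} gives coefficient $F_iF_r$ on $F_{n+1}^2$, and after rewriting $F_{i+r}=F_rF_{i+1}+F_{r-1}F_i$ and $F_{i+r-1}=F_rF_i+F_{r-1}F_{i-1}$ the coefficients of $F_nF_{n+1}$ and $F_n^2$ each collapse to $-F_iF_r$, so everything reduces to Cassini's identity via $F_{n+1}^2-F_nF_{n+1}-F_n^2=F_{n+1}F_{n-1}-F_n^2=(-1)^n$. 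The second argument is the more attractive one in context, since it reuses Proposition~\ref{prop:splitsum} and needs only Cassini (itself the $i=r=1$, shifted case of the statement) as outside input; the Binet argument is fully self-contained and generalizes immediately to any Lucas sequence. Either would be a strict improvement on the citation the paper actually gives.
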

\begin{proof}
For the proof see~\cite{Vajda}.%
\end{proof}

\section{Maximal resistance distance in linear 2-trees with a bend at vertex $k$}

When determining the resistance distance between node $1$ and node $n$ we use a similar strategy to that detailed in~\cite{bef} and demonstrated in Figures ~\ref{fig:2treepost} and~\ref{fig:2treepostnext}
\begin{itemize}
\item First perform the $\Delta$--Y transform on the leftmost triangle (defined by the vertices $1$, $2$, and $3$).  This results in a new graph node $*$ as shown in Figure~\ref{fig:2treepost}.  
\item Next, sum the weight between vertices $2$ and $*$ with the weight between vertices $2$ and $4$, delete vertex 2 and rename vertex * as vertex 2 as shown in Figure~\ref{fig:2treepostnext}.  
\item Perform a $\Delta$-Y transform on the new left-most triangle.  
\end{itemize}
We continue in this manner until all the $k-2$nd triangle has been removed.  We then repeat this process starting with the rightmost triangle (defined by the vertices $n$, $n-1$ and $n-2$) until the $n-k-2$nd triangle is removed and we are left with two triangles and two long tail. (See Figure~\ref{fig:amanda3}).  We then perform one final $\Delta$--Y transformation on the $n-k-1$st triangle leaving us with a pair of parallel edges and a two long tails, as in Figure~\ref{fig:amanda4}.
\begin{remark}
Notice that the $j$th $\Delta$--Y transformation transforms a triangle with node labels $j,j +1$,  and $ j+2$ into a $Y$ with nodes labeled $j, j +1, j+2$, and $\ast$. We name the edges so that $R_A^j = r(j,j+1)$, $R_B^j = r(j,j+2)$, and $R_C^j = r(j+1,j+2)$.  Thus, in the subsequent, equivalent network, $t_j = R_3^j = r( \ast, j)$, $s_j = R_2^j = r( \ast, j+1)$, and $b_j = R_1^j = r( \ast, j+2)$.  
%For convenience in the proofs in the remainder of this paper we use the following notation.  
%The application of the $k$th $\Delta$--Y transform results in 3 new resistances which we label $b_k=R_2^k$, $s_k = R_1^k$ and $t_k=R_3^k$, where $t_k$ is the "tail" resistance.  
We call $t_k$ the tail resistance because after a sequence of $\Delta$--Y transforms several resistors are left in a tail.  This resistance will never be involved in another $\Delta$-Y transformation. Notice that when performing the merge step described in the second bullet above, $s_k$ will always be on the edge that is merged. It is easy to verify that $R_C^j=1$  for every $j$. 
\end{remark}
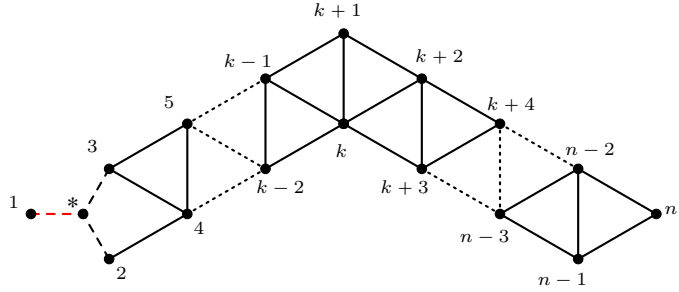
\begin{figure}
\begin{center}
\begin{tikzpicture}[line cap=round,line join=round,>=triangle 45,x=1.0cm,y=1.0cm,scale = 1.2]
%\draw [line width=.8pt] (-5.464101615137757,2.)-- (-4.598076211353318,1.5);
%\draw [line width=.8pt] (-4.598076211353318,1.5)-- (-4.598076211353318,2.5);
\draw [line width=.8pt,dashed,red] (-5.464101615137757,2.)-- (-4.88675134594813,2.);
\draw [line width=.8pt,dashed] (-4.598076211353318,1.5)-- (-4.88675134594813,2.);
\draw [line width=.8pt,dashed] (-4.598076211353318,2.5)-- (-4.88675134594813,2.);
\draw [line width=.8pt] (-4.598076211353318,2.5)-- (-3.732050807568879,2.);
\draw [line width=.8pt] (-3.732050807568879,2.)-- (-3.7320508075688785,3.);
\draw [line width=.8pt,dotted] (-3.7320508075688785,3.)-- (-2.8660254037844393,2.5);
\draw [line width=.8pt] (-2.8660254037844393,2.5)-- (-2.866025403784439,3.5);
\draw [line width=.8pt] (-2.866025403784439,3.5)-- (-2.,3.);
\draw [line width=.8pt] (-2.,3.)-- (-2.,4.);
\draw [line width=.8pt] (-2.,4.)-- (-1.1339745962155612,3.5);
\draw [line width=.8pt] (-1.1339745962155612,3.5)-- (-2.,3.);
\draw [line width=.8pt] (-2.,3.)-- (-1.1339745962155616,2.5);
\draw [line width=.8pt] (-1.1339745962155616,2.5)-- (-1.1339745962155612,3.5);
\draw [line width=.8pt] (-1.1339745962155612,3.5)-- (-0.2679491924311225,3.);
\draw [line width=.8pt] (-0.2679491924311225,3.)-- (-1.1339745962155616,2.5);
\draw [line width=.8pt,dotted] (-1.1339745962155616,2.5)-- (-0.26794919243112303,2.);
\draw [line width=.8pt,dotted] (-0.26794919243112303,2.)-- (-0.2679491924311225,3.);
\draw [line width=.8pt,dotted] (-0.2679491924311225,3.)-- (0.5980762113533165,2.5);
\draw [line width=.8pt] (0.5980762113533165,2.5)-- (-0.26794919243112303,2.);
\draw [line width=.8pt] (-0.26794919243112303,2.)-- (0.5980762113533152,1.5);
\draw [line width=.8pt] (0.5980762113533152,1.5)-- (0.5980762113533165,2.5);
\draw [line width=.8pt] (0.5980762113533165,2.5)-- (1.464101615137755,2.);
\draw [line width=.8pt] (1.464101615137755,2.)-- (0.5980762113533152,1.5);
\draw [line width=.8pt] (-2.,4.)-- (-2.866025403784439,3.5);
\draw [line width=.8pt,dotted] (-2.866025403784439,3.5)-- (-3.7320508075688785,3.);
\draw [line width=.8pt] (-3.7320508075688785,3.)-- (-4.598076211353318,2.5);
%\draw [line width=.8pt] (-4.598076211353318,2.5)-- (-5.464101615137757,2.);
\draw [line width=.8pt] (-4.598076211353318,1.5)-- (-3.732050807568879,2.);
\draw [line width=.8pt,dotted] (-3.732050807568879,2.)-- (-2.8660254037844393,2.5);
\draw [line width=.8pt] (-2.8660254037844393,2.5)-- (-2.,3.);
\begin{scriptsize}
\draw [fill=black] (-2.,4.) circle (1.5pt);
\draw[color=black] (-2.06648828953202,4.259331085745072) node {$k+1$};
\draw [fill=black] (-1.1339745962155612,3.5) circle (1.5pt);
\draw[color=black] (-0.9407359844212153,3.7428094398707032) node {$k+2$};
\draw [fill=black] (-2.,3.) circle (1.5pt);
\draw[color=black] (-2.0267558552339913,2.6989231016718021) node {$k$};
\draw [fill=black] (-2.866025403784439,3.5) circle (1.5pt);
\draw[color=black] (-3.0597991469827295,3.689832860806665) node {$k-1$};
\draw [fill=black] (-2.8660254037844393,2.5) circle (1.5pt);
\draw[color=black] (-2.702207238300474,2.2919066737377372) node {$k-2$};
\draw [fill=black] (-1.1339745962155616,2.5) circle (1.5pt);
\draw[color=black] (-1.3248161826354898,2.2919066737377372) node {$k+3$};
\draw [fill=black] (-2.,3.) circle (1.5pt);
\draw [fill=black] (-0.2679491924311225,3.) circle (1.5pt);
\draw[color=black] (-0.14608729846064736,3.213043649230324) node {$k+4$};
\draw [fill=black] (-3.7320508075688785,3.) circle (1.5pt);
\draw[color=black] (-3.9339127015393545,3.2395319387623434) node {$5$};
\draw [fill=black] (-3.732050807568879,2.) circle (1.5pt);
\draw[color=black] (-3.6028090823891175,1.8488967383313488) node {$4$};
\draw [fill=black] (-0.26794919243112303,2.) circle (1.5pt);
\draw[color=black] (-0.4374584833128556,1.7488967383313488) node {$n-3$};
\draw [fill=black] (-4.598076211353318,2.5) circle (1.5pt);
\draw[color=black] (-4.78153796656396,2.7494985824199927) node {$3$};
\draw [fill=black] (-4.598076211353318,1.5) circle (1.5pt);
\draw[color=black] (-4.463678492179733,1.345619237222989) node {$2$};
\draw [fill=black] (-5.464101615137757,2.) circle (1.5pt);
\draw[color=black] (-5.655651521120585,2.1270237784175476) node {$1$};
\draw [fill=black] (-4.88675134594813,2.) circle (1.5pt);
\draw[color=black] (-5.0,2.1270237784175476) node {$*$};
\draw [fill=black] (0.5980762113533165,2.5) circle (1.5pt);
\draw[color=black] (0.7280262560959774,2.709766148121964) node {$n-2$};
\draw [fill=black] (0.5980762113533152,1.5) circle (1.5pt);
\draw[color=black] (0.4234109264777597,1.2721075267550078) node {$n-1$};
\draw [fill=black] (1.464101615137755,2.) circle (1.5pt);
\draw[color=black] (1.628628100184621,2.0475589098214906) node {$n$};
\end{scriptsize}
\end{tikzpicture}

%\begin{tikzpicture}[auto,node distance=1.5cm, thick,main node/.style={circle,draw, fill=red!10!}]
%\node[main node] (1) [] {n};
%\node[main node] (2) [above left of=1] {n-1};
%\node[main node] (3) [below left of=2] {n-2};
%\node[main node] (4) [above left of=3] {n-3};
%\node[main node] (5) [below left of=4] {n-4};
%\node[main node] (6) [above left of=5] {6};
%\node[main node] (7) [below left of=6] {5};
%\node[main node] (8) [above left of=7] {4};
%\node[main node] (9) [below left of=8] {3};
%\node[main node] (10) [above left of=9] {2};
%\node[main node] (11) [below left of=10] {*};
%\node[main node] (12) [left of=11] {1};
%\foreach \s/\t in {2/1, 2/3, 3/1, 2/4, 3/5,3/4, 4/5, 6/7, 6/8, 7/8, 7/9, 8/9, 8/10} {\path[draw] (\s) edge node[above]{1} (\t);}  
%\foreach \s/\t in {4/6,5/7,5/6} {\path[draw,dotted] (\s) edge (\t);}  
%\foreach \s/\t in { 9/11} {\path[draw,dashed] (\s) edge node[above]{$r_b$}(\t);}  
%\foreach \s/\t in {10/11} {\path[draw,dashed] (\s) edge node[above]{$r_s$}(\t);}  
%\foreach \s/\t in {11/12} {\path[draw,dashed,red] (\s) edge node[above]{$r_t$}(\t);}  
%
%
%\end{tikzpicture}\\
\end{center}
\caption{A bent 2-tree after the first $\Delta$--Y transformation on the left.  The dashed edges are the edges with new weights after the transformation.  The dashed red edge (from node 1 to node $\ast$) is the ``tail'' of the transformation.}
\label{fig:2treepost}
\end{figure}

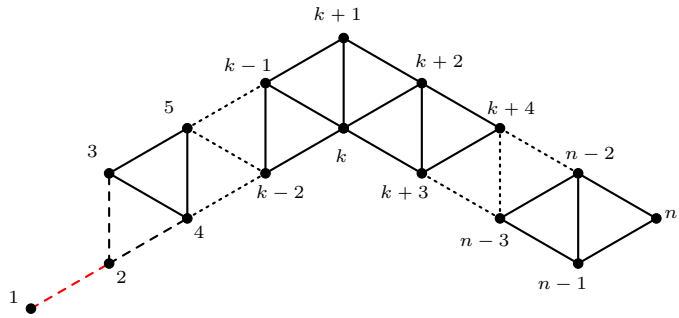
\begin{figure}
\begin{center}
\begin{tikzpicture}[line cap=round,line join=round,>=triangle 45,x=1.0cm,y=1.0cm,scale = 1.2]
%\draw [line width=.8pt,dashed] (-5.464101615137757,2.)-- (-4.598076211353318,1.5);
\draw [line width=.8pt,dashed] (-4.598076211353318,1.5)-- (-4.598076211353318,2.5);
\draw [line width=.8pt,dashed,red] (-5.464101615137757,1.)-- (-4.598076211353318,1.5);
%\draw [line width=.8pt,dashed] (-4.598076211353318,1.5)-- (-4.88675134594813,2.);
%\draw [line width=.8pt,dashed] (-4.598076211353318,2.5)-- (-4.88675134594813,2.);
\draw [line width=.8pt] (-4.598076211353318,2.5)-- (-3.732050807568879,2.);
\draw [line width=.8pt] (-3.732050807568879,2.)-- (-3.7320508075688785,3.);
\draw [line width=.8pt,dotted] (-3.7320508075688785,3.)-- (-2.8660254037844393,2.5);
\draw [line width=.8pt] (-2.8660254037844393,2.5)-- (-2.866025403784439,3.5);
\draw [line width=.8pt] (-2.866025403784439,3.5)-- (-2.,3.);
\draw [line width=.8pt] (-2.,3.)-- (-2.,4.);
\draw [line width=.8pt] (-2.,4.)-- (-1.1339745962155612,3.5);
\draw [line width=.8pt] (-1.1339745962155612,3.5)-- (-2.,3.);
\draw [line width=.8pt] (-2.,3.)-- (-1.1339745962155616,2.5);
\draw [line width=.8pt] (-1.1339745962155616,2.5)-- (-1.1339745962155612,3.5);
\draw [line width=.8pt] (-1.1339745962155612,3.5)-- (-0.2679491924311225,3.);
\draw [line width=.8pt] (-0.2679491924311225,3.)-- (-1.1339745962155616,2.5);
\draw [line width=.8pt,dotted] (-1.1339745962155616,2.5)-- (-0.26794919243112303,2.);
\draw [line width=.8pt,dotted] (-0.26794919243112303,2.)-- (-0.2679491924311225,3.);
\draw [line width=.8pt,dotted] (-0.2679491924311225,3.)-- (0.5980762113533165,2.5);
\draw [line width=.8pt] (0.5980762113533165,2.5)-- (-0.26794919243112303,2.);
\draw [line width=.8pt] (-0.26794919243112303,2.)-- (0.5980762113533152,1.5);
\draw [line width=.8pt] (0.5980762113533152,1.5)-- (0.5980762113533165,2.5);
\draw [line width=.8pt] (0.5980762113533165,2.5)-- (1.464101615137755,2.);
\draw [line width=.8pt] (1.464101615137755,2.)-- (0.5980762113533152,1.5);
\draw [line width=.8pt] (-2.,4.)-- (-2.866025403784439,3.5);
\draw [line width=.8pt,dotted] (-2.866025403784439,3.5)-- (-3.7320508075688785,3.);
\draw [line width=.8pt] (-3.7320508075688785,3.)-- (-4.598076211353318,2.5);
%\draw [line width=.8pt] (-4.598076211353318,2.5)-- (-5.464101615137757,2.);
\draw [line width=.8pt,dashed] (-4.598076211353318,1.5)-- (-3.732050807568879,2.);
\draw [line width=.8pt,dotted] (-3.732050807568879,2.)-- (-2.8660254037844393,2.5);
\draw [line width=.8pt] (-2.8660254037844393,2.5)-- (-2.,3.);
\begin{scriptsize}
\draw [fill=black] (-2.,4.) circle (1.5pt);
\draw[color=black] (-2.06648828953202,4.259331085745072) node {$k+1$};
\draw [fill=black] (-1.1339745962155612,3.5) circle (1.5pt);
\draw[color=black] (-0.9407359844212153,3.7428094398707032) node {$k+2$};
\draw [fill=black] (-2.,3.) circle (1.5pt);
\draw[color=black] (-2.0267558552339913,2.6989231016718021) node {$k$};
\draw [fill=black] (-2.866025403784439,3.5) circle (1.5pt);
\draw[color=black] (-3.0597991469827295,3.689832860806665) node {$k-1$};
\draw [fill=black] (-2.8660254037844393,2.5) circle (1.5pt);
\draw[color=black] (-2.702207238300474,2.2919066737377372) node {$k-2$};
\draw [fill=black] (-1.1339745962155616,2.5) circle (1.5pt);
\draw[color=black] (-1.3248161826354898,2.2919066737377372) node {$k+3$};
\draw [fill=black] (-2.,3.) circle (1.5pt);
\draw [fill=black] (-0.2679491924311225,3.) circle (1.5pt);
\draw[color=black] (-0.14608729846064736,3.213043649230324) node {$k+4$};
\draw [fill=black] (-3.7320508075688785,3.) circle (1.5pt);
\draw[color=black] (-3.9339127015393545,3.2395319387623434) node {$5$};
\draw [fill=black] (-3.732050807568879,2.) circle (1.5pt);
\draw[color=black] (-3.6028090823891175,1.8488967383313488) node {$4$};
\draw [fill=black] (-0.26794919243112303,2.) circle (1.5pt);
\draw[color=black] (-0.4374584833128556,1.7488967383313488) node {$n-3$};
\draw [fill=black] (-4.598076211353318,2.5) circle (1.5pt);
\draw[color=black] (-4.78153796656396,2.7494985824199927) node {$3$};
\draw [fill=black] (-4.598076211353318,1.5) circle (1.5pt);
\draw[color=black] (-4.463678492179733,1.345619237222989) node {$2$};
\draw [fill=black] (-5.464101615137757,1.) circle (1.5pt);
\draw[color=black] (-5.655651521120585,1.1270237784175476) node {$1$};
%\draw [fill=black] (-4.88675134594813,2.) circle (1.5pt);
%\draw[color=black] (-5.0,2.1270237784175476) node {$*$};
\draw [fill=black] (0.5980762113533165,2.5) circle (1.5pt);
\draw[color=black] (0.7280262560959774,2.709766148121964) node {$n-2$};
\draw [fill=black] (0.5980762113533152,1.5) circle (1.5pt);
\draw[color=black] (0.4234109264777597,1.2721075267550078) node {$n-1$};
\draw [fill=black] (1.464101615137755,2.) circle (1.5pt);
\draw[color=black] (1.628628100184621,2.0475589098214906) node {$n$};
\end{scriptsize}
\end{tikzpicture}%
%\begin{tikzpicture}[auto,node distance=1.5cm, thick,main node/.style={circle,draw, fill=red!10!}]
%\node[main node] (1) [] {n};
%\node[main node] (2) [above left of=1] {n-1};
%\node[main node] (3) [below left of=2] {n-2};
%\node[main node] (4) [above left of=3] {n-3};
%\node[main node] (5) [below left of=4] {n-4};
%\node[main node] (6) [above left of=5] {6};
%\node[main node] (7) [below left of=6] {5};
%\node[main node] (8) [above left of=7] {4};
%\node[main node] (9) [below left of=8] {3};
%\node[main node] (10) [above left of=9] {2};
%\node[main node] (11) [left of=10] {1};
%\foreach \s/\t in {2/1, 2/3, 3/1, 2/4, 3/5,3/4, 4/5, 6/7, 6/8, 7/8, 8/9, 9/7} {\path[draw] (\s) edge node[above]{1} (\t);}  
%\foreach \s/\t in {4/6,5/7,5/6} {\path[draw,dotted] (\s) edge (\t);}  
%\foreach \s/\t in { 9/10} {\path[draw,dashed] (\s) edge node[below]{$r_b$}(\t);}  
%\foreach \s/\t in { 8/10} {\path[draw,dashed] (\s) edge node[above]{$r_s+1$}(\t);}  
%\foreach \s/\t in {10/11} {\path[draw,dashed,red] (\s) edge node[above]{$r_t$}(\t);}  
%
%
%\end{tikzpicture}\\
\end{center}
\caption{A bent linear 2-tree after the first $\Delta$--Y transformation on the left.  Once two edges are merged, a vertex is removed, and a vertex is renamed.  The dashed edges are the edges with new weights after the transformation.  The dashed red edge (from node 1 to node 2) is the ``tail'' of the transformation.}
\label{fig:2treepostnext}
\end{figure}

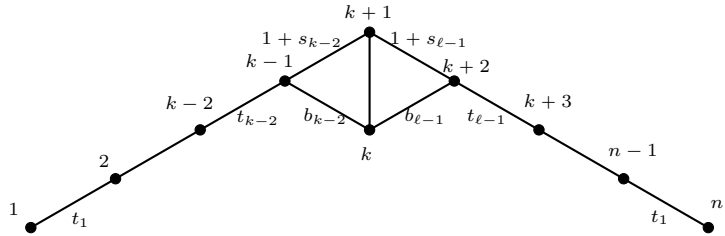
\begin{figure}
\begin{center}

\begin{tikzpicture}[line cap=round,line join=round,>=triangle 45,x=1.0cm,y=1.0cm,scale = 1.3]

\draw [line width=.8pt] (-0.464101615137757,1.)-- (0.40192378864668243,1.5);
\draw [line width=.8pt] (0.40192378864668243,1.5)-- (1.2679491924311215,2.);
\draw [line width=.8pt] (1.2679491924311215,2.)-- (2.1339745962155607,2.5);
\draw [line width=.8pt] (3.,2.)-- (2.1339745962155607,2.5);
\draw [line width=.8pt] (2.1339745962155607,2.5)-- (3.,3.);
\draw [line width=.8pt] (3.,3.)-- (3.,2.);
\draw [line width=.8pt] (3.,2.)-- (3.8660254037844393,2.5);
\draw [line width=.8pt] (3.,3.)-- (3.8660254037844393,2.5);
\draw [line width=.8pt] (3.8660254037844393,2.5)-- (4.7320508075688785,2.);
\draw [line width=.8pt] (4.7320508075688785,2.)-- (5.598076211353318,1.5);
\draw [line width=.8pt] (5.598076211353319,1.5)-- (6.464101615137759,1.);
\begin{scriptsize}
\draw [fill=black] (3.,2.) circle (1.5pt);
\draw[color=black] (2.9674492066303038,1.757563732424974) node {$k$};
\draw [fill=black] (3.,3.) circle (1.5pt);
\draw[color=black] (2.9872376055767735,3.203174860784933) node {$k+1$};
\draw [fill=black] (2.1339745962155607,2.5) circle (1.5pt);
\draw[color=black] (1.9780292593068036,2.699311291337303) node {$k-1$};
\draw [fill=black] (3.8660254037844393,2.5) circle (1.5pt);
\draw[color=black] (3.9865517523735083,2.65194170918640687) node {$k+2$};
\draw[color=black] (0.04371326228936186,1.0798110685083771) node {$t_1$};
\draw[color=black] (1.8555155716857813,2.10890194147783464) node {$t_{k-2}$};
\draw[color=black] (2.546945729017816,2.1285962126712864) node {$b_{k-2}$};
\draw[color=black] (2.307897933758701,2.9110661173163911) node {$1 + s_{k-2}$};
\draw[color=black] (3.566048274761021,2.1285962126712864) node {$b_{\ell-1}$};
\draw[color=black] (3.6066830779216107,2.911295976324501) node {$1+s_{\ell-1}$};
\draw[color=black] (4.19927704104806,2.1187020131980515) node {$t_{\ell-1}$};
\draw [fill=black] (1.2679491924311217,2.) circle (1.5pt);
\draw[color=black] (1.157233905135359,2.274178115568494) node {$k-2$};
\draw [fill=black] (0.4019237886466823,1.5) circle (1.5pt);
\draw[color=black] (0.2861211493836193,1.6794681419067443) node {$2$};
\draw [fill=black] (-0.464101615137757,1.) circle (1.5pt);
\draw[color=black] (-0.638986501363853,1.1897052679816122) node {$1$};
\draw [fill=black] (4.732050807568879,2.) circle (1.5pt);
\draw[color=black] (4.822611607861866,2.2890194147783464) node {$k+3$};
\draw [fill=black] (5.598076211353319,1.5) circle (1.5pt);
\draw[color=black] (5.693301161506545,1.7844152416433618) node {$n-1$};
\draw [fill=black] (6.464101615137759,1.) circle (1.5pt);
\draw[color=black] (6.5540965156779905,1.2397052679816122) node {$n$};
\draw[color=black] (5.970338746757125,1.10699168690351422) node {$t_1$};
\end{scriptsize}
\end{tikzpicture}
\end{center}
\caption{A bent linear 2-tree after $k-2$ transformations on the left and $\ell-1=n-k-2$ transformations on the right.  }
\label{fig:amanda3}
\end{figure}

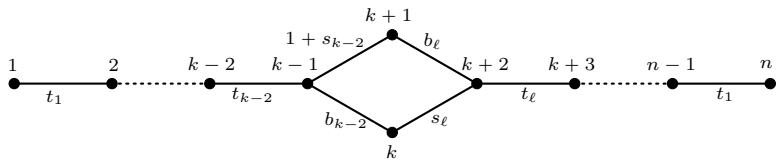
\begin{figure}
\begin{center}

\begin{tikzpicture}[line cap=round,line join=round,>=triangle 45,x=1.0cm,y=1.0cm,scale = 1.3]

\draw [line width=.8pt] (0.1339745962155613,2.5)-- (1.,3.);
\draw [line width=.8pt] (1.,3.)-- (1.8660254037844388,2.5);
\draw [line width=.8pt] (1.8660254037844388,2.5)-- (1.,2.);
\draw [line width=.8pt] (1.,2.)-- (0.1339745962155613,2.5);
\draw [line width=.8pt] (1.8660254037844388,2.5)-- (2.866025403784439,2.5);
\draw [line width=.8pt,dotted] (2.866025403784439,2.5)-- (3.866025403784439,2.5);
\draw [line width=.8pt] (3.866025403784439,2.5)-- (4.866025403784439,2.5);
\draw [line width=.8pt] (0.1339745962155613,2.5)-- (-0.8660254037844387,2.5);
\draw [line width=.8pt,dotted] (-0.8660254037844387,2.5)-- (-1.866025374954007,2.5002401267639827);
\draw [line width=.8pt] (-1.866025374954007,2.5002401267639827)-- (-2.866025374954007,2.5002401267639827);
\begin{scriptsize}
\draw [fill=black] (1.,2.) circle (1.5pt);
\draw[color=black] (0.9943675155481322,1.8043542220683444) node {$k$};
\draw [fill=black] (1.,3.) circle (1.5pt);
\draw[color=black] (0.9643675155481322,3.2009587792214767) node {$k+1$};
\draw [fill=black] (0.1339745962155613,2.5) circle (1.5pt);
\draw[color=black] (0.006497989491263595,2.692157170814489) node {$k-1$};
\draw [fill=black] (1.8660254037844388,2.5) circle (1.5pt);
\draw[color=black] (1.9521704642942779,2.6971238821591275) node {$k+2$};
\draw[color=black] (0.5303328865536137,2.118588582995031) node {$b_{k-2}$};
\draw[color=black] (1.4133688558872892,2.936657840984067) node {$b_\ell$};
\draw[color=black] (1.4882024126104823,2.118455428373585) node {$s_\ell$};
\draw[color=black] (0.315366175208975,2.936591263673344) node {$1+ s_{k-2}$};
\draw [fill=black] (2.866025403784439,2.5) circle (1.5pt);
\draw[color=black] (2.8352064336279534,2.6971238821591275) node {$k+3$};
\draw[color=black] (2.401171804633435,2.387989387198525) node {$t_\ell$};
\draw [fill=black] (3.866025403784439,2.5) circle (1.5pt);
\draw[color=black] (3.8529428050633765,2.692090593503766) node {$ n-1$};
\draw [fill=black] (4.866025403784439,2.5) circle (1.5pt);
\draw[color=black] (4.825779042464883,2.6971238821591275) node {$n$};
\draw[color=black] (4.406711124815003,2.387989387198525) node {$t_1$};
\draw [fill=black] (-0.8660254037844387,2.5) circle (1.5pt);
\draw[color=black] (-0.846604557153135,2.692090593503766) node {$k-2$};
\draw[color=black] (-0.42753663950325493,2.3730226758538864) node {$t_{k-2}$};
\draw [fill=black] (-1.866025374954007,2.5002401267639827) circle (1.5pt);
\draw[color=black] (-1.8493742172439194,2.692024016193043) node {$2$};
\draw [fill=black] (-2.866025374954007,2.5002401267639827) circle (1.5pt);
\draw[color=black] (-2.882077300023981,2.692090593503766) node {$1$};
\draw[color=black] (-2.4480426710294623,2.358055964509248) node {$t_1$};
\end{scriptsize}
\end{tikzpicture}

\end{center}
\caption{A bent linear 2-tree after the $k-2$ transformations on the left and $\ell=n-k-1$ transformations on the right.  This leaves a set of parallel edges and two long tails.}
\label{fig:amanda4}
\end{figure}

	\begin{theorem}\label{thm:effResBend}
	Let $G$ be a bent linear 2-tree with $n$ vertices, $m=n-2$ triangles and a single bend at vertex $k$. Letting $\ell=m-k+1$, then the effective resistance between nodes $1$ and $n$ is given by
	\begin{equation}\label{eq:embent}r_{m,k}(1,n)=\frac{(F_{2\ell+2}F_{k-1}^2+F_{2k-2}F^2_{\ell})(F_{2\ell+2}F^2_{k-2}+F_{2k-2}F^2_{\ell+1}+F_{2k-2}F_{2\ell+2})}{F_{2k-2}F_{2\ell+2}F_{2m+2}}+\sum_{i=1}^{k-2}\frac{F_iF_{i+1}}{L_iL_{i+1}}+\sum_{i=1}^{\ell}\frac{F_iF_{i+1}}{L_iL_{i+1}}.\end{equation}
	\end{theorem}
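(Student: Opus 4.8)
The plan is to treat the bent $2$--tree as an electrical network and run the $\Delta$--Y reduction sketched just before the statement, sweeping inward from both ends and then collapsing what remains with the series and parallel rules. The computational engine is the following closed form for the \emph{straight} sweep, which is essentially the reduction carried out in \cite{bef} and which I would prove by induction on $p$ using Definition~\ref{def:dy}: after performing the $\Delta$--Y transformation on each of the first $p$ triangles of a straight linear $2$--tree (each followed by the merge step of Figure~\ref{fig:2treepostnext}), the vertices $1,\dots,p+1$ form a simple path of total resistance $\sum_{i=1}^{p}t_i$ with $t_i=\tfrac{F_iF_{i+1}}{L_iL_{i+1}}$, and the surviving triangle on $\{p+1,p+2,p+3\}$ has $r(p+1,p+2)=\tfrac{F_{p+1}^2}{F_{2p+2}}$, $r(p+1,p+3)=\tfrac{F_{p+2}^2}{F_{2p+2}}$, $r(p+2,p+3)=1$, while the spoke $s_p$ equals $\tfrac{F_p^2}{F_{2p+2}}$. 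The inductive step uses $R_A+R_B+R_C=\tfrac{F_{2p+4}}{F_{2p+2}}$ (from $F_{p+1}^2+F_{p+2}^2=F_{2p+3}$ and $F_{2p+2}=F_{p+1}L_{p+1}$), and the identity $F_{2m}=F_{m+1}^2-F_{m-1}^2$ to simplify the merged edge $1+s_{p+1}$.

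Applying this with $p=k-2$ to the straight block $T_1,\dots,T_{k-1}$ (which touches the rest of the graph only along $\{k,k+1\}$) produces a left tail of resistance $P:=\sum_{i=1}^{k-2}\tfrac{F_iF_{i+1}}{L_iL_{i+1}}$ ending at vertex $k-1$, with the surviving triangle $T_{k-1}=\{k-1,k,k+1\}$ carrying $r(k-1,k)=\tfrac{F_{k-1}^2}{F_{2k-2}}$ and $r(k-1,k+1)=\tfrac{F_k^2}{F_{2k-2}}$, while $\{k,k+1\},\{k,k+2\},\{k+1,k+2\},\{k,k+3\}$ still have weight $1$. Relabelling $k\mapsto 1'$ and $k+j\mapsto j'$ $(j\ge 2)$ turns $T_{k+1},\dots,T_m$ into a straight $2$--tree with $\ell-1$ triangles, so the same invariant, swept from the $n$ end, handles $T_m,\dots,T_{k+2}$; one extra $\Delta$--Y on the bend triangle $T_{k+1}=\{k,k+2,k+3\}$ (whose spoke $s_{\ell-1}$ merges with the weight--$1$ edge $\{k+1,k+2\}$) absorbs $k+3$ into the tail and leaves $r(k,k+2)=\tfrac{F_\ell^2}{F_{2\ell}}$, $r(k+1,k+2)=1+s_{\ell-1}=\tfrac{F_{\ell+1}^2}{F_{2\ell}}$, $r(k,k+1)=1$; this is Figure~\ref{fig:amanda3}, with a right tail of resistance $\sum_{i=1}^{\ell-1}\tfrac{F_iF_{i+1}}{L_iL_{i+1}}$. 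A last $\Delta$--Y on $T_k=\{k,k+1,k+2\}$ (now $R_A=1$, $R_B=\tfrac{F_\ell^2}{F_{2\ell}}$, $R_C=\tfrac{F_{\ell+1}^2}{F_{2\ell}}$, so $R_A+R_B+R_C=\tfrac{F_{2\ell+2}}{F_{2\ell}}$) contributes a spoke $\tfrac{R_BR_C}{R_A+R_B+R_C}=\tfrac{F_\ell F_{\ell+1}}{L_\ell L_{\ell+1}}=t_\ell$ that merges into the right tail (total $Q:=\sum_{i=1}^{\ell}\tfrac{F_iF_{i+1}}{L_iL_{i+1}}$), while the remaining spokes become $r(k,k+2)=\tfrac{F_\ell^2}{F_{2\ell+2}}$ and $r(k+1,k+2)=\tfrac{F_{\ell+1}^2}{F_{2\ell+2}}$. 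This is Figure~\ref{fig:amanda4}: the edge $\{k,k+1\}$ is gone, and current between the two tails flows through a $4$--cycle on $k-1,k,k+2,k+1$ (in cyclic order) with edge resistances $\tfrac{F_{k-1}^2}{F_{2k-2}},\ \tfrac{F_\ell^2}{F_{2\ell+2}},\ \tfrac{F_{\ell+1}^2}{F_{2\ell+2}},\ \tfrac{F_k^2}{F_{2k-2}}$.

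Since $k-1$ and $k+2$ are the only cut vertices, $r_{m,k}(1,n)=P+Q+r_{\mathrm{mid}}(k-1,k+2)$, and $r_{\mathrm{mid}}(k-1,k+2)$ is the parallel combination of the two $k-1$--to--$k+2$ paths around the cycle, of resistances $\tfrac{\alpha}{F_{2k-2}F_{2\ell+2}}$ and $\tfrac{\beta}{F_{2k-2}F_{2\ell+2}}$ with $\alpha=F_{2\ell+2}F_{k-1}^2+F_{2k-2}F_\ell^2$ and $\beta=F_{2\ell+2}F_k^2+F_{2k-2}F_{\ell+1}^2$; replacing $F_k^2=F_{2k-2}+F_{k-2}^2$ rewrites $\beta$ as $F_{2\ell+2}F_{k-2}^2+F_{2k-2}F_{\ell+1}^2+F_{2k-2}F_{2\ell+2}$, the second factor in \eqref{eq:embent}. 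Thus $r_{\mathrm{mid}}(k-1,k+2)=\tfrac{\alpha\beta}{F_{2k-2}F_{2\ell+2}(\alpha+\beta)}$, and the crux is the evaluation
\[\alpha+\beta=F_{2\ell+2}\bigl(F_{k-1}^2+F_{k-2}^2\bigr)+F_{2k-2}\bigl(F_\ell^2+F_{\ell+1}^2\bigr)+F_{2k-2}F_{2\ell+2}=F_{2\ell+2}F_{2k-3}+F_{2k-2}F_{2\ell+3}=F_{2k+2\ell}=F_{2m+2},\]
using the corollary $F_{2n-1}=F_n^2+F_{n-1}^2$ twice, the recurrence $F_{2\ell+1}+F_{2\ell+2}=F_{2\ell+3}$, Proposition~\ref{prop:splitsum}, and $2k+2\ell=2(k+\ell)=2m+2$. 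Hence $r_{\mathrm{mid}}(k-1,k+2)=\tfrac{\alpha\beta}{F_{2k-2}F_{2\ell+2}F_{2m+2}}$, and adding $P$ and $Q$ gives \eqref{eq:embent} verbatim. The part I expect to require the most care is the first step: pinning down the exact closed forms for $t_i,s_i,b_i$ and the surviving triangle, and verifying that they propagate correctly through the two \emph{irregular} transformations at the bend (the extra $\Delta$--Y on $T_{k+1}$ and the final one on $T_k$, where the ``$R_C=1$'' pattern no longer holds); once the network of Figure~\ref{fig:amanda4} is reached, the rest is the short series/parallel computation above, whose only non--routine ingredient is the collapse $\alpha+\beta=F_{2m+2}$.
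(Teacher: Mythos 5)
Your proposal is correct and follows essentially the same route as the paper: sweep $\Delta$--Y transformations inward from both ends, land on the configuration of Figure~\ref{fig:amanda4}, and finish with the series/parallel computation in which the key collapse is $\alpha+\beta=F_{2m+2}$ (the paper's $F_{2\ell+2}F_{2p+1}+F_{2p+2}F_{2\ell+1}+F_{2\ell+2}F_{2p+2}=F_{2p+2\ell+4}$ is the same identity). The only difference is cosmetic: you re-derive the closed forms for $t_i,s_i,b_i$ by induction and track the two irregular steps at the bend explicitly, whereas the paper imports those values from \cite[Lemma 28]{bef} and reads the bend configuration off its figures.
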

	\begin{proof}
	For ease of notation, let $p=k-2$.  Starting at node 1, perform $p\;$ $\Delta-Y$ transformations.  Similarly, starting at node $n$ perform $\ell\;$ $\Delta$-Y transformations.  This results in the configuration shown Figure~\ref{fig:amanda4}.
Thus 
\begin{align*}
r_{m,k}(1,n) &= \left(\frac{1}{{b_p}+{s_{\ell}}}+\frac{1}{{s_p}+{b_{\ell}}+1}\right)^{-1}+\sum_{i=1}^{k-2}\frac{F_iF_{i+1}}{L_iL_{i+1}}+\sum_{i=1}^{\ell}\frac{F_iF_{i+1}}{L_iL_{i+1}}\\
&=\left(\frac{1}{\frac{F_{p+1}}{L_{p+1}}+\frac{F^2_{\ell}}{F_{2\ell+2}}}+\frac{1}{\frac{F^2_{p}}{F_{2p+2}}+\frac{F_{\ell+1}}{L_{\ell+1}} +1}\right)^{-1}+\sum_{i=1}^{k-2}\frac{F_iF_{i+1}}{L_iL_{i+1}}+\sum_{i=1}^{\ell}\frac{F_iF_{i+1}}{L_iL_{i+1}}\quad\text{(By~\cite[Lemma 28]{bef})}\\	
&=\left(\frac{F_{2p+2}F_{2\ell+2}}{F_{2\ell+2}F^2_{p+1}+F_{2p+2}F^2_{\ell}}+\frac{F_{2p+2}F_{2\ell+2}}{F_{2\ell+2}F^2_p+F_{2p+2}F^2_{\ell+1}+F_{2p+2}F_{2\ell+2}}\right)^{-1}+\sum_{i=1}^{k-2}\frac{F_iF_{i+1}}{L_iL_{i+1}}+\sum_{i=1}^{\ell}\frac{F_iF_{i+1}}{L_iL_{i+1}}\\	
&=\frac{(F_{2\ell+2}F_{k-1}^2+F_{2k-2}F^2_{\ell})(F_{2\ell+2}F^2_{k-2}+F_{2k-2}F^2_{\ell+1}+F_{2k-2}F_{2\ell+2})}{F_{2p+2}F_{2\ell+2}(F_{2\ell+2}(F^2_{p+1}+F^2_{p})+F_{2p+2}(F^2_{\ell+1} + F^2_{\ell})+F_{2\ell+2}F_{2p+2})}+\sum_{i=1}^{k-2}\frac{F_iF_{i+1}}{L_iL_{i+1}}+\sum_{i=1}^{\ell}\frac{F_iF_{i+1}}{L_iL_{i+1}}\\	
&=\frac{(F_{2\ell+2}F_{k-1}^2+F_{2k-2}F^2_{\ell})(F_{2\ell+2}F^2_{k-2}+F_{2k-2}F^2_{\ell+1}+F_{2k-2}F_{2\ell+2})}{F_{2p+2}F_{2\ell+2}(F_{2\ell+2}F_{2p+1}+F_{2p+2}F_{2\ell+1}+F_{2\ell+2}F_{2p+2})}+\sum_{i=1}^{k-2}\frac{F_iF_{i+1}}{L_iL_{i+1}}+\sum_{i=1}^{\ell}\frac{F_iF_{i+1}}{L_iL_{i+1}}\\	
&=\frac{(F_{2\ell+2}F_{k-1}^2+F_{2k-2}F^2_{\ell})(F_{2\ell+2}F^2_{k-2}+F_{2k-2}F^2_{\ell+1}+F_{2k-2}F_{2\ell+2})}{F_{2k-2}F_{2\ell+2}F_{2p+2\ell+4}}+\sum_{i=1}^{k-2}\frac{F_iF_{i+1}}{L_iL_{i+1}}+\sum_{i=1}^{\ell}\frac{F_iF_{i+1}}{L_iL_{i+1}}\\	
&=\frac{(F_{2\ell+2}F_{k-1}^2+F_{2k-2}F^2_{\ell})(F_{2\ell+2}F^2_{k-2}+F_{2k-2}F^2_{\ell+1}+F_{2k-2}F_{2\ell+2})}{F_{2k-2}F_{2\ell+2}F_{2m+2}}+\sum_{i=1}^{k-2}\frac{F_iF_{i+1}}{L_iL_{i+1}}+\sum_{i=1}^{\ell}\frac{F_iF_{i+1}}{L_iL_{i+1}}.\end{align*}
	\end{proof}

\begin{lemma}For integers $m$ and $k$ the relationship \[F_{2k}F_{m-k-1}F_{m-k+2}=F_{m+1}F_{m-2k}F_{k+1}F_{k-2}+F_{m-2k+1}F_mF_{k-1}F_{k+2}\] holds.
\end{lemma}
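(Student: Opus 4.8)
The plan is to expand every Fibonacci number appearing in the identity about the index $m-2k$, which turns the whole statement into an equality of quadratic forms in the pair $\bigl(F_{m-2k},F_{m-2k+1}\bigr)$, and then to reduce to a single scalar Fibonacci identity in $k$ alone. Writing $x=F_{m-2k}$ and $y=F_{m-2k+1}$, four applications of Proposition~\ref{prop:splitsum} (used in the renamed form $F_{a+b}=F_bF_{a+1}+F_{b-1}F_a$, valid for all integers $a,b$ once one invokes $F_{-n}=(-1)^{n+1}F_n$), each with base point $a=m-2k$, give
\begin{align*}
F_{m-k-1}&=F_{k-1}\,y+F_{k-2}\,x, & F_{m-k+2}&=F_{k+2}\,y+F_{k+1}\,x,\\
F_{m+1}&=F_{2k+1}\,y+F_{2k}\,x, & F_{m}&=F_{2k}\,y+F_{2k-1}\,x.
\end{align*}
Substituting the first two into the left side of the lemma and the last two into the right side and collecting terms, the left side becomes
\[
F_{2k}\bigl(F_{k-1}F_{k+2}\,y^2+(F_{k-1}F_{k+1}+F_{k-2}F_{k+2})\,xy+F_{k-2}F_{k+1}\,x^2\bigr)
\]
and the right side becomes
\[
F_{2k}F_{k-1}F_{k+2}\,y^2+\bigl(F_{2k+1}F_{k+1}F_{k-2}+F_{2k-1}F_{k-1}F_{k+2}\bigr)xy+F_{2k}F_{k-2}F_{k+1}\,x^2 .
\]
The $y^2$– and $x^2$–coefficients already agree term by term, so the lemma would follow from the single scalar identity
\[
F_{2k}\bigl(F_{k-1}F_{k+1}+F_{k-2}F_{k+2}\bigr)=F_{2k+1}F_{k+1}F_{k-2}+F_{2k-1}F_{k-1}F_{k+2}.
\]

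To prove this scalar identity I would split $F_{2k+1}=F_{2k}+F_{2k-1}$ on the right; after cancelling the common term $F_{2k}F_{k+1}F_{k-2}$ this leaves
\[
F_{2k}\bigl(F_{k-1}F_{k+1}+F_{k-2}F_{k+2}-F_{k+1}F_{k-2}\bigr)=F_{2k-1}\bigl(F_{k+1}F_{k-2}+F_{k-1}F_{k+2}\bigr).
\]
Now $F_{k+2}-F_{k+1}=F_k$ collapses the left bracket to $F_{k-1}F_{k+1}+F_{k-2}F_k$, which equals $F_k^2+F_{k-1}^2$ by Cassini's identity (Proposition~\ref{prop:newprop2} with $i=1,r=-1$, applied once at $n=k$ and once at $n=k-1$, the two resulting $\pm1$'s cancelling), and $F_k^2+F_{k-1}^2=F_{2k-1}$ is the Corollary $F_{2m-3}=F_{m-1}^2+F^2_{m-2}$ taken at $m=k+1$. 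The right bracket $F_{k+1}F_{k-2}+F_{k-1}F_{k+2}$ is precisely $F_{2k}$, again by Proposition~\ref{prop:splitsum} (with its ``$k$'' replaced by $k+1$ and its ``$m$'' replaced by $k-1$). Hence both sides equal $F_{2k-1}F_{2k}$, which proves the scalar identity and therefore the lemma.

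The hard part here is not any estimate but the bookkeeping: the decisive choice is to expand everything about $m-2k$ rather than about $m$ or $k$, since that is exactly what forces the $y^2$– and $x^2$–parts to cancel automatically and isolates the one nontrivial scalar identity; after that, the observation that this identity factors (via $F_{2k+1}=F_{2k}+F_{2k-1}$) into the two elementary facts $F_{k-1}F_{k+1}+F_{k-2}F_k=F_{2k-1}$ and $F_{k-2}F_{k+1}+F_{k-1}F_{k+2}=F_{2k}$ is routine. One could alternatively try to read the lemma off Proposition~\ref{prop:Melham}, but matching its symmetric four-index pattern to the asymmetric products $F_{k+1}F_{k-2}$ and $F_{k-1}F_{k+2}$ appears more delicate than the two-variable reduction above, so I would present the latter.
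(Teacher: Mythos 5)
Your proof is correct, but it takes a genuinely different route from the paper's. The paper proves this lemma in two lines by specializing Proposition~\ref{prop:Melham} with $n=k$, $a=m-k$, $b=-1$, $c=2$ (so that $F_{b+c}=F_1=1$ drops out and the asymmetric products $F_{k+1}F_{k-2}$ and $F_{k-1}F_{k+2}$ arise as $F_{n-b}F_{n-c}$ and $F_{n+b}F_{n+c}$), and then cleans up the signs with $F_{-r}=(-1)^{r+1}F_r$ --- exactly the alternative you considered and set aside as "more delicate"; the matching is in fact immediate once one allows a negative shift $b=-1$. Your argument instead expands everything in the basis $\bigl(F_{m-2k},F_{m-2k+1}\bigr)$, observes that the $x^2$- and $y^2$-coefficients match term by term, and reduces the lemma to the single scalar identity $F_{2k}(F_{k-1}F_{k+1}+F_{k-2}F_{k+2})=F_{2k+1}F_{k+1}F_{k-2}+F_{2k-1}F_{k-1}F_{k+2}$, which you then verify from the addition formula and Cassini; I checked the expansions, the coefficient comparison, and the reduction $F_{k-1}F_{k+1}+F_{k-2}F_k=F_{2k-1}$, $F_{k+1}F_{k-2}+F_{k-1}F_{k+2}=F_{2k}$, and all steps are sound (including the use of the addition formula at negative indices, which you correctly flag). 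The trade-off is clear: the paper's proof is shorter but leans on the cited Melham identity as a black box, while yours is longer but entirely self-contained, using only the two-term addition formula and Proposition~\ref{prop:newprop2}, and it makes visible why the identity is really a one-variable statement in $k$ dressed up with a free parameter $m$.
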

\begin{proof} Applying Proposition~\ref{prop:Melham} with $n=k$, $a =m-k$, $b=-1$ and $c=2$ yields
\[(-1)^{m+1}F_{2k}F_{m-k-1}F_{m-k+2}=F_{m+1}F_{2k-m}F_{k+1}F_{k-2}-F_{2k-m-1}F_mF_{k-1}F_{k+2}.\]
Since $F_{-r}=(-1)^{r+1}F_r$, this is equivalent to
\[\begin{aligned}F_{2k}F_{m-k-1}F_{m-k+2}&=(-1)^{m+1}(-1)^{m-2k+1}F_{m+1}F_{m-2k}F_{k+1}F_{k-2}\\
&\quad\quad-(-1)^{m+1}(-1)^{m-2k+2}F_{m-2k+1}F_mF_{k-1}F_{k+2}\\
&=F_{m+1}F_{m-2k}F_{k+1}F_{k-2}+F_{m-2k+1}F_mF_{k-1}F_{k+2}.\end{aligned}\]
\end{proof}
\begin{lemma}\label{lem34}
For integers $m$ and $k$ the relationship
\[F_{k+1}F_m-F_{k-1}F_{m+2}=(-1)^{k-1}F_{m-k+1}\]
holds.
\end{lemma}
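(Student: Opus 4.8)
The plan is to recognize the left-hand side as a direct instance of Vajda's identity, Proposition~\ref{prop:newprop2}, which reads $F_{n+i}F_{n+r} - F_nF_{n+i+r} = (-1)^nF_iF_r$. First I would make the substitution $n = k-1$, $i = 2$, $r = m-k+1$. Then $F_{n+i}F_{n+r} = F_{k+1}F_m$, $F_nF_{n+i+r} = F_{k-1}F_{m+2}$, and the right-hand side becomes $(-1)^{k-1}F_2F_{m-k+1} = (-1)^{k-1}F_{m-k+1}$ because $F_2 = 1$. This is exactly the claimed relation, so the proof is a one-line verification once the substitution is identified; note in particular that $(k+1)+m = (k-1)+(m+2)$, which is the structural signal that an index-sum–preserving identity of Vajda type applies.

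The only subtlety, and the step I would flag as needing a sentence of care, is that the lemma is asserted for all integers $m,k$, whereas Proposition~\ref{prop:newprop2} is most naturally read with indices in the usual range; here $r = m-k+1$ may be zero or negative. I would dispatch this exactly as the preceding lemma does when it invokes Proposition~\ref{prop:Melham}: use the standard extension $F_{-r} = (-1)^{r+1}F_r$ and observe that the identity, being polynomial in the Fibonacci sequence, persists over $\mathbb{Z}$ under this extension.

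If one preferred to avoid appealing to Proposition~\ref{prop:newprop2} for non-positive indices, an equally short alternative is induction on $k$. Setting $g(k) = F_{k+1}F_m - F_{k-1}F_{m+2}$ and $h(k) = (-1)^{k-1}F_{m-k+1}$, the Fibonacci recurrence immediately gives $g(k) = g(k-1) + g(k-2)$, and a two-line computation using $(-1)^{k-1} = -(-1)^{k-2}$ together with $F_{m-k+3} - F_{m-k+2} = F_{m-k+1}$ shows $h(k) = h(k-1) + h(k-2)$; since $g$ and $h$ agree at $k=0$ (both equal $-F_{m+1}$) and at $k=1$ (both equal $F_m$), they agree for every integer $k$ by forward and backward induction. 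Either way there is no real computational obstacle — the whole content is spotting the substitution $n=k-1$, $i=2$, $r=m-k+1$ — so I would lead with the Vajda argument and mention the induction only as a remark.
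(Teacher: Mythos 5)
Your proposal is correct and is essentially identical to the paper's proof, which also applies Proposition~\ref{prop:newprop2} with $n=k-1$, $i=2$, $r=m-k+1$ and stops there. The extra remarks about negative indices and the inductive alternative are sound but not needed beyond what the paper already does.
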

\begin{proof} Applying proposition \ref{prop:newprop2} with $n=k-1$, $i=2$, and $r=m-k+1$ yields the desired equality.
\end{proof}
\begin{lemma}\label{lem35}
For integers $m$ and $k$ the relationship \[F_{2m-2k+2}F_{k+1}F_{k-2}-F_{2k}F_{m-k-1}F_{m-k+2}=(-1)^{k+1}F_{m-2k+1}(F_{m+2}+F_{k-1}F_{m-k})\] holds.
\end{lemma}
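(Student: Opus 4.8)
The plan is to reduce the claimed identity to the three preceding results of this section, by first factoring $F_{m-2k+1}$ out of the left-hand side. The opening move is to rewrite the index $2m-2k+2$ as the sum $(m+1)+(m-2k+1)$ and apply Proposition~\ref{prop:splitsum}, which gives $F_{2m-2k+2}=F_{m-2k+1}F_{m+2}+F_{m-2k}F_{m+1}$. Multiplying this by $F_{k+1}F_{k-2}$ and then subtracting the (unnamed) lemma immediately preceding Lemma~\ref{lem34}, which expands $F_{2k}F_{m-k-1}F_{m-k+2}$ as $F_{m+1}F_{m-2k}F_{k+1}F_{k-2}+F_{m-2k+1}F_mF_{k-1}F_{k+2}$, cancels the common $F_{m+1}F_{m-2k}F_{k+1}F_{k-2}$ terms and leaves
\[
F_{2m-2k+2}F_{k+1}F_{k-2}-F_{2k}F_{m-k-1}F_{m-k+2}=F_{m-2k+1}\bigl(F_{m+2}F_{k+1}F_{k-2}-F_mF_{k-1}F_{k+2}\bigr).
\]
Thus it suffices to show that the parenthesized factor equals $(-1)^{k+1}\bigl(F_{m+2}+F_{k-1}F_{m-k}\bigr)$.

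For that, I would first replace $F_{k+1}F_{k-2}$ by $F_kF_{k-1}+(-1)^{k-1}$, which is Proposition~\ref{prop:newprop2} applied with $n=k-1$, $i=2$, $r=-1$ (using $F_{-1}=1$, i.e.\ the identity $F_{-r}=(-1)^{r+1}F_r$). After this substitution the factor becomes $F_{k-1}\bigl(F_{m+2}F_k-F_mF_{k+2}\bigr)+(-1)^{k-1}F_{m+2}$. The difference $F_{m+2}F_k-F_mF_{k+2}$ equals $(-1)^{k+1}F_{m-k}$; this is Lemma~\ref{lem34} after the index shift $k\mapsto k+1$ (equivalently, Proposition~\ref{prop:newprop2} with $n=m$, $i=2$, $r=k-m$, followed by one more application of $F_{-r}=(-1)^{r+1}F_r$). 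Combining the two substitutions and noting that $(-1)^{k-1}=(-1)^{k+1}$ yields $F_{m+2}F_{k+1}F_{k-2}-F_mF_{k-1}F_{k+2}=(-1)^{k+1}\bigl(F_{k-1}F_{m-k}+F_{m+2}\bigr)$, and multiplying back by $F_{m-2k+1}$ gives the statement.

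I do not expect a genuine obstacle here: once the decomposition of $2m-2k+2$ is chosen so that the previous lemma can be subtracted off to expose the factor $F_{m-2k+1}$, everything is routine Fibonacci bookkeeping. The only care needed is with signs, in particular the repeated appeals to $F_{-r}=(-1)^{r+1}F_r$ and the (trivial but easy to misuse) fact that $(-1)^{k-1}$ and $(-1)^{k+1}$ coincide; I would also verify the final identity on a small case such as $k=3,\,m=8$ as a sanity check.
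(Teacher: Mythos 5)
Your proof is correct and follows essentially the same route as the paper: expand $F_{2m-2k+2}$ via Proposition~\ref{prop:splitsum}, subtract the unnamed lemma preceding Lemma~\ref{lem34} to cancel the $F_{m+1}F_{m-2k}F_{k+1}F_{k-2}$ terms and factor out $F_{m-2k+1}$, then finish with the product identities. Your endgame is marginally leaner (one application of Proposition~\ref{prop:newprop2} plus a shifted Lemma~\ref{lem34}, versus the paper's two applications followed by another split-sum step), but this is the same argument.
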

\begin{proof}
Using Lemma~\ref{lem34}
\[\begin{aligned}F_{2m-2k+2}F_{k+1}F_{k-2}&-F_{2k}F_{m-k-1}F_{m-k+2}\\
&=F_{k+1}F_{k-2}(F_{m-2k+1}F_{m+2}+F_{m-2k}F_{m+1})- F_{2k}F_{m-k-1}F_{m-k+2}\\
&=F_{k+1}F_{k-2}(F_{m-2k+1}F_{m+2}+F_{m-2k}F_{m+1})\\
&\quad\quad- F_{m+1}F_{m-2k}F_{k+1}F_{k-2}-F_{m-2k+1}F_mF_{k-1}F_{k+2}\\
&=F_{k+1}F_{k-2}F_{m-2k+1}F_{m+2}- F_{m-2k+1}F_mF_{k-1}F_{k+2}\\
&=((-1)^{k+1}+F_kF_{k-1})F_{m-2k+1}F_{m+2}-((-1)^k+F_kF_{k+1})F_{m-2k+1}F_m\\
&=(-1)^{k+1}F_{m-2k+1}(F_{m+2}+F_m)+F_kF_{m-2k+1}(F_{k-1}F_{m+2}-F_{k+1}F_m)\\
&=(-1)^{k+1}F_{m-2k+1}(F_{m+2}+F_{m-k+k}-F_{k}F_{m-k+1})\\
&=(-1)^{k+1}F_{m-2k+1}(F_{m+2}+F_{k-1}F_{m-k}).\end{aligned}\]
\end{proof}

\begin{theorem}\label{thm:bend} 
Given a bent linear 2-tree with $n$ vertices, $m = n-2$ triangles, and one bend located at vertex $k \in\{3,4,\ldots,n-3\}$, the resistance distance between node 1 and node $n$ is
	 \begin{equation}\label{eq:univconf}r_{m,k}(1,n)=\frac{m+1}{5}+\frac{4F_{m+1}}{5L_{m+1}}+\frac{\sum_{j=3}^{k}\left[(-1)^jF_{m-2j+3}(F_{m+2}+F_{j-2}F_{m-j+1})\right]}{F_{2m+2}},\end{equation}\noindent where $F_p$ is the $p$th Fibonacci number and $L_q$ is the $q$th Lucas number. 
\	 \end{theorem}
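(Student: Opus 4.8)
The plan is to derive the closed form in \eqref{eq:univconf} from the explicit expression for $r_{m,k}(1,n)$ already established in Theorem~\ref{thm:effResBend}. Recall $\ell = m-k+1$ there, and note the two sums $\sum_{i=1}^{k-2} \frac{F_iF_{i+1}}{L_iL_{i+1}}$ and $\sum_{i=1}^{\ell} \frac{F_iF_{i+1}}{L_iL_{i+1}}$ together run over the index sets $\{1,\dots,k-2\}$ and $\{1,\dots,m-k+1\}$. The first move is to handle the rational ``connection'' term in \eqref{eq:embent}: I would expand the numerator $(F_{2\ell+2}F_{k-1}^2 + F_{2k-2}F_\ell^2)(F_{2\ell+2}F_{k-2}^2 + F_{2k-2}F_{\ell+1}^2 + F_{2k-2}F_{2\ell+2})$ and, using the Fibonacci product identities collected in Section~\ref{sec:note} (especially Propositions~\ref{prop:splitsum} and \ref{prop:newprop2}, their corollaries, and Proposition~\ref{prop:Melham}), rewrite it so that the common factor $F_{2k-2}F_{2\ell+2}$ in the denominator cancels cleanly, leaving a polynomial in Fibonacci numbers over $F_{2m+2}$ plus, ideally, a piece that combines with the telescoping tail sums.

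Next I would attack the tail sums. The key observation is that $\frac{F_iF_{i+1}}{L_iL_{i+1}}$ should telescope against a ``partial-fraction'' style decomposition: since $L_iL_{i+1}$ and $F_iF_{i+1}$ are related via $F_{2i} = F_iL_i$, one expects $\frac{F_iF_{i+1}}{L_iL_{i+1}} = \frac{1}{5}\cdot\frac{F_{2i}F_{2i+2}}{F_{2i+1}^2 - (-1)^{\text{something}}}$-type expression, or more usefully a difference $a_{i+1} - a_i$ up to a constant. Indeed the target answer has the linear term $\frac{m+1}{5}$, which is exactly what a sum of $\tfrac15$ over roughly $m$ indices produces, plus the correction $\frac{4F_{m+1}}{5L_{m+1}}$, which has the shape of a single surviving boundary term of a telescoping sum. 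So I would prove a lemma of the form $\sum_{i=1}^{N} \frac{F_iF_{i+1}}{L_iL_{i+1}} = \frac{N}{5} - \frac{c F_N}{5 L_N} + (\text{const})$ by induction on $N$, using $L_m = F_{m+1}+F_{m-1}$, $2F_{m+1} = F_m + L_m$, and $L_{m+1} = 2F_m + F_{m+1}$. Combining the two tail sums (with $N = k-2$ and $N = \ell = m-k+1$) then yields $\frac{(k-2)+(m-k+1)}{5} = \frac{m-1}{5}$ plus boundary terms in $F_{k-2}/L_{k-2}$ and $F_{\ell}/L_\ell$; these boundary terms must recombine with the surviving pieces of the rational connection term to produce exactly $\frac{m+1}{5} + \frac{4F_{m+1}}{5L_{m+1}}$ and the summation over $j$.

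For the summation $\sum_{j=3}^{k}\left[(-1)^j F_{m-2j+3}(F_{m+2} + F_{j-2}F_{m-j+1})\right]$ in the target, I would telescope in the other direction: Lemma~\ref{lem35} (with $k$ replaced by $j$ and noting $m-2k+1 \mapsto m-2j+1$, $m-k \mapsto m-j$, etc.) gives precisely that $(-1)^{j+1} F_{m-2j+1}(F_{m+2}+F_{j-1}F_{m-j})$ equals a difference $F_{2m-2j+2}F_{j+1}F_{j-2} - F_{2j}F_{m-j-1}F_{m-j+2}$. Reindexing so this matches the summand in \eqref{eq:univconf}, the sum over $j$ collapses telescopically to the difference of the $j=k$ and $j=3$ (or $j=2$) endpoint terms. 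Those endpoint terms, divided by $F_{2m+2}$, should be exactly what is needed to reconcile the $k$-dependent part of the rational term in \eqref{eq:embent} after the cancellation described above. So the final step is algebraic bookkeeping: collect (i) the $k$-independent rational leftover, (ii) the $\frac{m-1}{5}$ from the tails, (iii) the tail boundary terms, (iv) the telescoped-$j$ endpoints, and verify their sum is the right-hand side of \eqref{eq:univconf}.

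The main obstacle I anticipate is the cancellation in step one: the numerator in \eqref{eq:embent} is a product of two three-term Fibonacci expressions with arguments $2k-2$ and $2\ell+2 = 2(m-k+1)+2$, so after expansion there are on the order of nine Fibonacci-product terms, and showing that $F_{2k-2}F_{2\ell+2}$ divides the result with the right quotient requires careful, repeated use of the ``$F_{n+i}F_{n+r} - F_nF_{n+i+r} = (-1)^nF_iF_r$'' identity (Proposition~\ref{prop:newprop2}) and the Melham identity (Proposition~\ref{prop:Melham}) to convert everything into a single common argument $2m+2$. Getting the signs $(-1)^j$ to line up between Lemma~\ref{lem35}'s $(-1)^{k+1}$ and the target's $(-1)^j$ after reindexing, and keeping track of which boundary terms survive the two telescopings, is where errors are most likely to creep in; I would verify the whole identity numerically for small $(m,k)$ (say $m \le 10$, all valid $k$) before trusting the symbolic manipulation.
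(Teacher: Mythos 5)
Your plan assembles the right raw materials (Theorem~\ref{thm:effResBend}, closed forms for the tail sums, Lemma~\ref{lem35}), but two of its load-bearing steps fail as stated. First, the ``clean cancellation'' in step one does not happen: $F_{2k-2}F_{2\ell+2}$ does \emph{not} divide the expanded numerator of the connection term. For example, with $m=5$, $k=3$, $\ell=3$ the numerator is $(F_8F_2^2+F_4F_3^2)(F_8F_1^2+F_4F_4^2+F_4F_8)=33\cdot 111=3663$, while $F_4F_8=63$ does not divide $3663$. The connection term only becomes tractable after it is combined with increments of the tail sums, which is precisely why a direct simplification for general $k$ is so resistant. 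Second, the $j$-sum does not ``collapse telescopically'' via Lemma~\ref{lem35}: the left-hand side $F_{2m-2k+2}F_{k+1}F_{k-2}-F_{2k}F_{m-k-1}F_{m-k+2}$ is a difference of two \emph{different} functions of $k$, not $a_{k+1}-a_k$ for a single sequence, so consecutive terms do not cancel. What is actually true --- and is the content of the paper's inductive step --- is that this expression equals $F_{2m+2}\bigl(r_{m,k+1}(1,n)-r_{m,k}(1,n)\bigr)$, i.e.\ the quantity that telescopes is the resistance itself, and Lemma~\ref{lem35} identifies each increment with one summand of the $j$-series. Establishing that identification requires the long grouping computation (the $A$, $B$, $C$ decomposition in the paper), which your proposal defers entirely.

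Once you repair both points you are forced into exactly the paper's structure: induct on $k$, prove the difference $r_{m,k+1}-r_{m,k}$ equals the $(k+1)$st summand, and separately verify a base case. That base case ($k=3$) is itself a nontrivial identity --- the paper needs Lemma~\ref{base} plus three auxiliary lemmas and two results quoted from~\cite{bef} --- and your plan's statement that the ``boundary terms must recombine'' gives no mechanism for it. Your instinct to check numerically for small $(m,k)$ is sound, but as written the proposal is a roadmap with the two hardest verifications missing and one of its shortcuts (the divisibility claim) demonstrably false.
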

	 \begin{proof}
	 Fix $n$ and $3\leq k \leq n-3$, and we induct on $k$.

	 Base Case ($k=3)$: From Theorem~\ref{thm:effResBend} 
	 \[\begin{split}r_{m,3}(1,n)&=\frac{(F_{2m-2}+F_{4}F^2_{m-2})(F_{2m-2}(1+F_4)+F_4F^2_{m-1})}{F_4F_{2\ell+2}F_{2m+2}}+\frac{1}{3}+\sum_{i=1}^{m-2}\frac{F_iF_{i+1}}{L_iL_{i+1}}\\
	 &=\frac{(F_{2m-2}+3F^2_{m-2})(4F_{2m-2}+3F^2_{m-1})}{3F_{2\ell+2}F_{2m+2}}+\frac{1}{3}+\sum_{i=1}^{m-2}\frac{F_iF_{i+1}}{L_iL_{i+1}}.\end{split}\]  By Lemma~\ref{base} in the Appendix this quantity is equal to
\[\frac{m+1}{5}+\frac{4F_{m+1}}{5L_{m+1}} -\frac{F_{m-3}(F_{m+2}+F_{m-2})}{F_{2m+2}} = \frac{m+1}{5}+\frac{4F_{m+1}}{5L_{m+1}}+\frac{\sum_{j=3}^{3}\left[(-1)^jF_{m-2j+3}(F_{m+2}+F_{j-2}F_{m-j+1})\right]}{F_{2m+2}}.\]	 
	 	We now assume that~\eqref{eq:univconf} holds for $k$ and show it also holds for $k+1$. Letting $\ell=m-k+1$ and $p=k-2$, from~\eqref{eq:embent} 
		\[\begin{aligned}&r_{m,k+1}(1,n)-r_{m,k}(1,n) =\frac{F_{k-1}F_k}{L_{k-1}L_k}-\frac{F_{\ell}F_{\ell+1}}{L_{\ell}L_{\ell+1}}+\frac{(F_{2\ell}F_{p+2}^2+F_{2p+4}F^2_{\ell-1})(F_{2\ell}F^2_{p+1}+F_{2p+4}F^2_{\ell}+F_{2p+4}F_{2\ell})}{F_{2p+4}F_{2\ell}F_{2m+2}}\\
		&\quad\quad\quad\quad-\frac{(F_{2\ell+2}F_{p+1}^2+F_{2p+2}F^2_{\ell})(F_{2\ell+2}F^2_{p}+F_{2p+2}F^2_{\ell+1}+F_{2p+2}F_{2\ell+2})}{F_{2p+2}F_{2\ell+2}F_{2m+2}}\\
		&=\frac{F_{p+1}^2F_{p+2}^2F_{2\ell}F_{2\ell+2}F_{2p+2\ell+4}-F_{\ell}^2F_{\ell+1}^2F_{2p+2}F_{2p+4}F_{2m+2}}{F_{2p+2}F_{2p+4}F_{2\ell}F_{2\ell+2}F_{2m+2}}\\
		&\quad+\frac{F_{2\ell+2}F_{2p+2}(F_{2\ell}^2F^2_{p+1}F^2_{p+2}+F_{2\ell}F^2_{p+2}F_{2p+4}F^2_{\ell}) }{F_{2p+2}F_{2p+4}F_{2\ell}F_{2\ell+2}F_{2m+2}}\\
		&\quad+ \frac{F_{2\ell+2}F_{2p+2}(F^2_{2\ell}F^2_{p+2}F_{2p+4}+F_{2p+4}F^2_{\ell-1}F_{2\ell}F^2_{p+1}+F_{2p+4}^2F^2_{\ell-1}F^2_{\ell}+F^2_{2p+4}F_{2\ell}F^2_{\ell-1})}{F_{2p+2}F_{2p+4}F_{2\ell}F_{2\ell+2}F_{2m+2}}\\
		&\quad-\frac{F_{2\ell}F_{2p+4}(F_{2\ell+2}^2F^2_{p+1}F^2_{p}+F_{2\ell+2}F^2_{p+1}F_{2p+2}F^2_{\ell+1} + F^2_{2\ell+2}F^2_{p+1}F_{2p+2})}{F_{2p+2}F_{2p+4}F_{2\ell}F_{2\ell+2}F_{2m+2}}\\
		&\quad-\frac{F_{2\ell}F_{2p+4}(F_{2p+2}F^2_{\ell}F_{2\ell+2}F^2_{p}+F_{2p+2}^2F^2_{\ell}F^2_{\ell+1}+F^2_{2p+2}F_{2\ell+2}F^2_{\ell})}{F_{2p+2}F_{2p+4}F_{2\ell}F_{2\ell+2}F_{2m+2}}.\end{aligned}\]
		Grouping those terms in the numerator containing the product $F_{2p+2}F_{2p+4}F_{2\ell}F_{2\ell+2}$ and canceling the common factors in the numerator and denomenator
\begin{align*}A&=\frac{F^2_{\ell-1}F^2_{p+1}+F_{2p+4}F^2_{\ell-1}+F^2_{p+2}F^2_{\ell}+F_{2\ell}F^2_{p+2} - F^2_{\ell}F^2_p-F_{2p+2}F^2_{\ell}-F^2_{p+1}F^2_{\ell+1}-F_{2\ell+2}F^2_{p+1}}{F_{2m+2}}\\
&=\frac{F^2_{\ell-1}(F^2_{p+1}+F_{2p+4})+F^2_{\ell}(F^2_{p+2}-F^2_p-F_{2p+2}) + F_{2\ell}F^2_{p+2}-F^2_{\ell+1}F^2_{p+1}-F_{2\ell+2}F^2_{p+1}}{F_{2m+2}}\\
&=\frac{F^2_{p+1}(F^2_{\ell-1}-F^2_{\ell+1}-F_{2\ell+2})+F^2_{\ell-1}F_{2p+4}+F_{2\ell}F^2_{p+2}}{F_{2m+2}}\\
&=\frac{-F^2_{p+1}F_{2\ell}-F^2_{p+1}F_{2\ell+2}+F^2_{\ell-1}F_{2p+4}+F_{2\ell}F^2_{p+2}}{F_{2m+2}}.\end{align*}
		Grouping those terms in the numerator containing the product $F_{2\ell}F_{2\ell+2}$ yields
\begin{align*}B&=\frac{F_{2\ell}F_{2\ell+2}F^2_{p+1}(F^2_{p+2}[F_{2p+2\ell+4}+F_{2p+2}F_{2\ell}]-F_{2\ell+2}F^2_pF_{2p+4})}{	F_{2p+2}F_{2p+4}F_{2\ell}F_{2\ell+2}F_{2m+2}}\\
&=\frac{F_{2\ell}F_{2\ell+2}F^2_{p+1}(F^2_{p+2}[F_{2p+2}F_{2\ell+1}+F_{2p+3}F_{2\ell+2}+F_{2p+2}F_{2\ell}]-F_{2\ell+2}F^2_pF_{2p+4})}{F_{2p+2}F_{2p+4}F_{2\ell}F_{2\ell+2}F_{2m+2}}\\	
&=\frac{F_{2\ell}F_{2\ell+2}F^2_{p+1}(F^2_{p+2}[F_{2p+2}F_{2\ell+2}+F_{2p+3}F_{2\ell+2}]-F_{2\ell+2}F^2_pF_{2p+4})}{F_{2p+2}F_{2p+4}F_{2\ell}F_{2\ell+2}F_{2m+2}}\\	
&=\frac{F_{2\ell}F_{2\ell+2}F^2_{p+1}(F^2_{p+2}F_{2p+4}F_{2\ell+2}-F_{2\ell+2}F^2_pF_{2p+4})}{F_{2p+2}F_{2p+4}F_{2\ell}F_{2\ell+2}F_{2m+2}}\\	
&=\frac{F_{2\ell}F_{2\ell+2}F^2_{p+1}F_{2p+2}F_{2p+4}F_{2\ell+2}}{F_{2p+2}F_{2p+4}F_{2\ell}F_{2\ell+2}F_{2m+2}}\\	
&=\frac{F^2_{p+1}F_{2\ell+2}}{F_{2m+2}}.\end{align*}	
Finally grouping the remaining terms yields
\begin{align*}C&=\frac{F_{2p+2}F_{2p+4}F^2_{\ell}(F_{2\ell+2}F_{2p+4}F^2_{\ell-1}-F^2_{\ell+1}[F_{2p+2\ell+4}+F_{2p+2}F_{2\ell}])}{	F_{2p+2}F_{2p+4}F_{2\ell}F_{2\ell+2}F_{2m+2}}\\
&=\frac{F_{2p+2}F_{2p+4}F^2_{\ell}(F_{2\ell+2}F_{2p+4}F^2_{\ell-1}-F^2_{\ell+1}F_{2p+4}F_{2\ell+2})}{F_{2p+2}F_{2p+4}F_{2\ell}F_{2\ell+2}F_{2m+2}}\\
&=\frac{-F_{2p+2}F_{2p+4}F^2_{\ell}F_{2\ell+2}F_{2p+4}F_{2\ell}}{F_{2p+2}F_{2p+4}F_{2\ell}F_{2\ell+2}F_{2m+2}}\\
&=\frac{-F_{2p+4}F^2_{\ell}}{F_{2m+2}}.\end{align*}
Thus,
\[\begin{aligned}r_{m,k+1}(1,n)-r_{m,k}(1,n) &=A+B+C\\
&=\frac{-F^2_{p+1}F_{2\ell}-F^2_{p+1}F_{2\ell+2}+F^2_{\ell-1}F_{2p+4}+F_{2\ell}F^2_{p+2}+F^2_{p+1}F_{2\ell+2}-F_{2p+4}F^2_{\ell}}{F_{2m+2}}\\
&=\frac{F_{2\ell}(F^2_{p+2}-F^2_{p+1})+F_{2p+4}(F^2_{\ell-1}-F_{\ell}^2)}{F_{2m+2}}\\
&=\frac{F_{2\ell}F_{p+3}F_p-F_{2p+4}F_{\ell-2}F_{\ell+1}}{F_{2m+2}}\\
&=\frac{F_{2m-2k+2}F_{k+1}F_{k-2}-F_{2k}F_{m-k-1}F_{m-k+2}}{F_{2m+2}}\\
&=\frac{(-1)^{k+1}F_{m-2k+1}(F_{m+2}+F_{k-1}F_{m-k})}{F_{2m+2}} \quad\text{(By Lemma~\ref{lem35}).}
\end{aligned}\]
Adding this result to $\displaystyle \frac{m+1}{5}+\frac{4F_{m+1}}{5L_{m+1}}+\frac{\sum_{j=3}^{k}\left[(-1)^jF_{m-2j+3}(F_{m+2}+F_{j-2}F_{m-j+1}\right]}{F_{2m+2}}$ yields

$\displaystyle \frac{m+1}{5}+\frac{4F_{m+1}}{5L_{m+1}}+\frac{\sum_{j=3}^{k+1}\left[(-1)^jF_{m-2j+3}(F_{m+2}+F_{j-2}F_{m-j+1}\right]}{F_{2m+2}}$ which is $r_{m,k+1}(1,n)$ as desired.

\end{proof}
\section{Appendix}\label{app}
\begin{lemma}\label{lem:nick}
\[2F_{2m-2}F_{m+1}^3+F_{2m+2}F_{m-1}^2F_m=L_m(F_{m+1}F_{2m-2}F_{m+2}+F_{m}F_{m-1}^2F_{m+1})\]
for any integer $m \geq 1$.
\end{lemma}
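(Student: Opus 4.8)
The plan is to strip the ``doubled-index'' Fibonacci numbers $F_{2m\pm2}$ out of the identity and reduce everything to a one-line consequence of the Fibonacci recurrence. First I would apply $F_{2t}=L_tF_t$ to write $F_{2m-2}=L_{m-1}F_{m-1}$ and $F_{2m+2}=L_{m+1}F_{m+1}$. Substituting, the left-hand side of the lemma becomes $F_{m-1}F_{m+1}\bigl(2L_{m-1}F_{m+1}^2+L_{m+1}F_{m-1}F_m\bigr)$ and the right-hand side becomes $F_{m-1}F_{m+1}\,L_m\bigl(L_{m-1}F_{m+2}+F_{m-1}F_m\bigr)$. The case $m=1$ is disposed of separately: there $F_{m-1}=F_0=0$ and both sides vanish. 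For $m\ge2$ the factor $F_{m-1}F_{m+1}$ is nonzero and may be cancelled, so the lemma is equivalent to
\[2L_{m-1}F_{m+1}^2+L_{m+1}F_{m-1}F_m=L_mL_{m-1}F_{m+2}+L_mF_mF_{m-1}.\]

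Next I would collect the two terms that are multiples of $F_{m-1}F_m$ on one side, obtaining $2L_{m-1}F_{m+1}^2-L_mL_{m-1}F_{m+2}=(L_m-L_{m+1})F_{m-1}F_m$. Since $L_{m+1}=L_m+L_{m-1}$ (immediate from $L_t=F_{t+1}+F_{t-1}$ and the Fibonacci recurrence), the right side is $-L_{m-1}F_{m-1}F_m$, and after dividing by the nonzero factor $L_{m-1}$ the whole lemma comes down to the single identity
\[L_mF_{m+2}=2F_{m+1}^2+F_{m-1}F_m.\]
This I would check directly: using $L_m=F_{m+1}+F_{m-1}$ and regrouping,
\[L_mF_{m+2}-2F_{m+1}^2-F_{m-1}F_m=F_{m+1}\bigl(F_{m+2}-2F_{m+1}\bigr)+F_{m-1}\bigl(F_{m+2}-F_m\bigr),\]
and since $F_{m+2}-2F_{m+1}=F_m-F_{m+1}=-F_{m-1}$ and $F_{m+2}-F_m=F_{m+1}$, the right side equals $-F_{m+1}F_{m-1}+F_{m-1}F_{m+1}=0$.

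There is no deep step here; the only ``obstacle'' is organizing the algebra in the right order, namely recognizing that $F_{m-1}F_{m+1}$ factors out of both sides at the start and that a further $L_{m-1}$ factors out after one rearrangement, leaving the bare Fibonacci identity above. The genuine care points are therefore the degenerate case $m=1$ and verifying that $F_{m-1}F_{m+1}$ and $L_{m-1}$ are nonzero for $m\ge2$ so that the two cancellations are legitimate. (Anyone preferring to avoid cancellation can run the same chain forward, starting from $L_mF_{m+2}=2F_{m+1}^2+F_{m-1}F_m$ and $L_{m+1}=L_m+L_{m-1}$, multiplying through by $F_{2m-2}F_{m+1}=L_{m-1}F_{m-1}F_{m+1}$, and re-expanding $F_{2m-2},F_{2m+2}$ to recover the stated identity.)
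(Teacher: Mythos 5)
Your argument is correct and takes essentially the same approach as the paper's: both begin by writing $F_{2m-2}=L_{m-1}F_{m-1}$ and $F_{2m+2}=L_{m+1}F_{m+1}$ and then finish with the basic Fibonacci and Lucas recurrences ($L_{m+1}=L_m+L_{m-1}$, $L_m=F_{m+1}+F_{m-1}$, etc.). The only difference is organizational --- you cancel the common factors $F_{m-1}F_{m+1}$ and $L_{m-1}$ to reduce to the single identity $L_mF_{m+2}=2F_{m+1}^2+F_{m-1}F_m$ (hence your separate $m=1$ check), whereas the paper carries those factors along in a forward chain of equalities.
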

\begin{proof}
\begin{align*}
&2F_{2m-2}F_{m+1}^3+F_{2m+2}F_{m-1}^2F_m\\ 
&\quad=2F_{m-1}L_{m-1}F_{m+1}^3 +F_{m+1}L_{m+1}F_{m-1}^2F_m \\
&\quad=F_{m+1}(2F_{m-1}L_{m-1}F_{m+1}^2 +L_{m+1}F_{m-1}^2F_m)\\
&\quad=F_{m+1}(2F_{m-1}L_{m-1}F_{m+1}^2 +L_{m}F_{m-1}^2F_m+L_{m-1}F_{m-1}^2F_m)\\
&\quad=F_{m+1}L_{m-1}(2F_{m-1}F_{m+1}^2 +F_{m-1}^2(F_{m+1}-F_{m-1}))+L_{m}F_{m-1}^2F_mF_{m+1}\\
&\quad=F_{m+1}L_{m-1}(F_{m-1}(F_{m+1}^2 -F_{m-1}^2) +F_{m-1}F_{m+1}^2+F_{m-1}^2F_{m+1})+L_{m}F_{m-1}^2F_mF_{m+1}\\
&\quad=F_{m+1}L_{m-1}(F_{m-1}F_{2m} +F_{m-1}F_{m+1}(F_{m+1}+F_{m-1}))+L_{m}F_{m-1}^2F_mF_{m+1}\\
&\quad=F_{m+1}L_{m-1}(F_{m-1}F_mL_m +F_{m-1}F_{m+1}L_m)+L_{m}F_{m-1}^2F_mF_{m+1}\\
&\quad=L_m(F_{m+1}L_{m-1}F_{m-1}F_{m+2}+F_mF_{m-1}^2F_{m+1})\\
&\quad=L_m(F_{m+1}F_{2m-2}F_{m+2}+F_{m}F_{m-1}^2F_{m+1})
\end{align*}
\end{proof}
\begin{lemma}\label{lema}
\[F_{m+1}F_{m+2}-3F_{m-3}F_m = 2F_{2m-3}+3F_{m+1}F_{m-2}+F_{m}F_{m-1}\]
for integers $m \geq 1$.
\end{lemma}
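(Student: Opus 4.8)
The plan is to recognize Lemma~\ref{lema} as a polynomial identity in two consecutive Fibonacci numbers, which can then be checked by a short substitution. Concretely, I would set $a = F_{m-2}$ and $b = F_{m-1}$ and apply the Fibonacci recurrence repeatedly to write $F_m = a+b$, $F_{m+1} = a+2b$, $F_{m+2} = 2a+3b$, and $F_{m-3} = b-a$. The only non-recurrence ingredient needed is the Corollary $F_{2m-3} = F_{m-1}^2 + F_{m-2}^2$ stated just after Proposition~\ref{prop:splitsum}, which lets me replace $F_{2m-3}$ by $a^2 + b^2$. Substituting these into both sides and expanding, the left-hand side $F_{m+1}F_{m+2} - 3F_{m-3}F_m$ collapses to $5a^2 + 7ab + 3b^2$, and the right-hand side $2F_{2m-3} + 3F_{m+1}F_{m-2} + F_m F_{m-1}$ collapses to the same polynomial; since the two expressions agree identically in $a$ and $b$, the lemma follows. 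Because the recurrence and every stated Fibonacci/Lucas identity are valid for all integer indices, this covers the full range $m \ge 1$ with no separate treatment of the small values where $F_{m-3}$ has a non-positive index.

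If a more identity-flavored argument is wanted, one can instead move $F_mF_{m-1}$ to the left and $3F_{m-3}F_m$ to the right, reducing the claim to $F_{m+1}F_{m+2} - F_mF_{m-1} = 2F_{2m-3} + 3F_{m+1}F_{m-2} + 3F_{m-3}F_m$, and show each side equals $F_{2m+1}$. For the left side, use $F_{m+2} = F_{m+1}+F_m$ and $F_{m+1}-F_{m-1} = F_m$ to obtain $F_{m+1}^2 + F_m^2$, which is $F_{2m+1}$ by the same corollary with $m$ replaced by $m+2$. For the right side, combine $3F_{m+1}F_{m-2} + 3F_{m-3}F_m = 3F_{m-1}(F_{m-2}+F_m) = 3F_{m-1}L_{m-1} = 3F_{2m-2}$, using $L_{m+1} = 2F_m + F_{m+1}$ (shifted) and $F_{2m} = L_m F_m$; then $2F_{2m-3} + 3F_{2m-2} = F_{2m+1}$ follows from $3F_n = F_{n+2}+F_{n-2}$ at $n = 2m-2$ together with two applications of the recurrence.

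I do not expect any real obstacle: this is an elementary Fibonacci/Lucas identity, and both routes above are short. The only thing requiring care is bookkeeping — keeping the index shifts straight when invoking the corollary $F_{2m-3} = F_{m-1}^2 + F_{m-2}^2$ and the Lucas identities — but no new idea is needed beyond the toolkit already collected in Section~\ref{sec:note}. I would write up the substitution proof, as it is the most transparent and self-contained.
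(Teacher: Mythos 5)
Your proposal is correct. Both of your routes check out: writing $a=F_{m-2}$, $b=F_{m-1}$ gives $F_{m+1}F_{m+2}-3F_{m-3}F_m=(a+2b)(2a+3b)-3(b-a)(a+b)=5a^2+7ab+3b^2$, and $2(a^2+b^2)+3(a+2b)a+(a+b)b$ is the same polynomial, so the substitution argument closes; and your remark that the recurrence and the corollary $F_{2m-3}=F_{m-1}^2+F_{m-2}^2$ persist for the negative indices that occur when $m\le 3$ is the right thing to say about the range $m\ge 1$. The paper's own proof is different from the substitution route you chose to write up: it starts from $F_{m+1}F_{m+2}=F_{2m+1}+F_mF_{m-1}$, expands $F_{2m+1}=F_{2m-2}+2F_{2m-1}=2F_{2m-3}+3F_{2m-2}$, and then uses the identity $F_{2m-2}-F_mF_{m-3}=F_{m+1}F_{m-2}$ to absorb the remaining cross term; this is essentially your second, ``identity-flavored'' route run in reverse, staying entirely within the catalogue of identities from Section~2. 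Your polynomial verification is more mechanical and self-contained (it needs only the recurrence plus one corollary), at the cost of being less transparent about \emph{why} the combination $2F_{2m-3}+3F_{m+1}F_{m-2}$ appears; the paper's manipulation makes the provenance of each term visible but leans on more of the identity toolkit. Either write-up would be acceptable.
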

\begin{proof}
\begin{align*}
&F_{m+1}F_{m+2}-3F_{m-3}F_m\\
&\quad = F_{2m+1}-3F_mF_{m-3}+F_mF_{m-1}\\
&\quad =F_{2m-2}+2F_{2m-1}-3F_mF_{m-3}+F_mF_{m-1}\\
&\quad =2F_{2m-3}+3(F_{2m-2}-F_mF_{m-3})+F_mF_{m-1}\\
&\quad = 2F_{2m-3}+3F_{m+1}F_{m-2}+F_{m}F_{m-1}
\end{align*}
\end{proof}
\begin{lemma}\label{lemb}
\[(F_{2m-2}+3F_{m-2}^2)(4F_{2m-2}+3F^2_{m-1})+F_{2m-2}F_{2m+2} = 3\left[F_{2m-2}(2F_{2m-3}+3F_{m+1}F_{m-2}+F_{m}F_{m-1})+F_mF_{m+1}F_{m-1}^2\right]\]
for integers $m \geq 1$.
\end{lemma}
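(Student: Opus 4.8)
The plan is to first simplify the right-hand side with Lemma~\ref{lema}, which replaces the factor $2F_{2m-3}+3F_{m+1}F_{m-2}+F_mF_{m-1}$ by $F_{m+1}F_{m+2}-3F_{m-3}F_m$; the right-hand side thus becomes $3F_{2m-2}F_{m+1}F_{m+2}-9F_{2m-2}F_mF_{m-3}+3F_mF_{m+1}F_{m-1}^2$. On the left-hand side I would expand the product and then apply the Fibonacci recurrence three times to get $F_{2m+2}=5F_{2m-2}+3F_{2m-3}$, followed by the corollary $F_{2m-3}=F_{m-1}^2+F_{m-2}^2$; this reduces the left-hand side to $9F_{2m-2}^2+6F_{2m-2}F_{m-1}^2+15F_{2m-2}F_{m-2}^2+9F_{m-2}^2F_{m-1}^2$.

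Equating the two expressions, dividing by $3$, and moving every term carrying the factor $F_{2m-2}$ to one side shows that the identity is equivalent to
\[
F_{2m-2}\left(3F_{2m-2}+2F_{m-1}^2+5F_{m-2}^2-F_{m+1}F_{m+2}+3F_mF_{m-3}\right)=F_{m-1}^2\left(F_mF_{m+1}-3F_{m-2}^2\right).
\]
To finish I would invoke the shifted corollary $F_{2m-2}=F_{m-1}F_m+F_{m-2}F_{m-1}=F_{m-1}L_{m-1}$ together with two elementary quadratic identities, namely $3F_{2m-2}+2F_{m-1}^2+5F_{m-2}^2-F_{m+1}F_{m+2}+3F_mF_{m-3}=F_{m-1}\left(3F_{m-1}-F_m\right)$ and $F_mF_{m+1}-3F_{m-2}^2=L_{m-1}\left(3F_{m-1}-F_m\right)$; substituting these into the displayed equation makes both sides equal to $F_{m-1}^2L_{m-1}\left(3F_{m-1}-F_m\right)$, which completes the proof.

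The two quadratic identities, and indeed every step of the reduction, are of the kind that can be verified mechanically: writing $F_{m-2}=F_m-F_{m-1}$, $F_{m-3}=2F_{m-1}-F_m$, $F_{m+1}=F_{m-1}+F_m$, $F_{m+2}=F_{m-1}+2F_m$, $L_{m-1}=2F_m-F_{m-1}$, $F_{2m-2}=2F_{m-1}F_m-F_{m-1}^2$, $F_{2m-3}=2F_{m-1}^2-2F_{m-1}F_m+F_m^2$, and $F_{2m+2}=F_{m-1}^2+4F_{m-1}F_m+3F_m^2$ turns each side into a homogeneous polynomial in $a=F_{m-1}$ and $b=F_m$; since the original statement is homogeneous of degree $4$ in $(a,b)$, it in fact suffices to check it for five values of $m$ lying on distinct rays, for instance $m=1,2,3,4,5$. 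I expect the only genuine difficulty to be bookkeeping --- keeping track of the factor of $3$ and of the factor $F_{m-1}$ while translating between the product form of the left-hand side and the fully expanded form --- rather than anything conceptual; once those cancellations are carried out, the residual identities are immediate from the substitutions above.
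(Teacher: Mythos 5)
Your proposal is correct, and I checked the two places where it could have gone wrong: the reduced left-hand side $9F_{2m-2}^2+6F_{2m-2}F_{m-1}^2+15F_{2m-2}F_{m-2}^2+9F_{m-2}^2F_{m-1}^2$ is right (using $F_{2m+2}=5F_{2m-2}+3F_{2m-3}$ and $F_{2m-3}=F_{m-1}^2+F_{m-2}^2$), and your two quadratic identities do hold --- writing $a=F_{m-1}$, $b=F_m$, both sides of your displayed equation come out to $a^2\left(-3a^2+7ab-2b^2\right)=F_{m-1}^2L_{m-1}\left(3F_{m-1}-F_m\right)$. Your route is genuinely different from the paper's. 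The paper proves the lemma by a long one-directional chain starting from the left-hand side, invoking a specific identity at each step ($3F_{2m}=F_{2m+2}+F_{2m-2}$, $F_{2m+1}=F_m^2+F_{m+1}^2$, $F_{m+3}^2+F_m^2=2(F_{m+1}^2+F_{m+2}^2)$, $F_{2m}=F_{m+1}^2-F_{m-1}^2$, $F_{2m-2}=F_{m-1}L_{m-1}$, and so on) until the right-hand side appears; it does not use Lemma~\ref{lema} inside this proof, whereas you apply it to rewrite the right-hand side first. What your approach buys is mechanizability: expressing every quantity as a polynomial in $a=F_{m-1}$ and $b=F_m$ turns the lemma into a single homogeneous quartic identity, which can be certified either by your explicit factorizations or by evaluation at five projectively distinct points $(F_{m-1},F_m)$ --- and since consecutive Fibonacci ratios are pairwise distinct, any five values of $m$ work. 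What the paper's approach buys is that each intermediate line is itself a named Fibonacci identity, keeping the argument in the same idiom as the rest of the appendix; the cost is that the chain has to be discovered rather than computed. Both are complete proofs; yours is arguably the more robust one to verify.
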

\begin{proof}
\begin{align*}
&(F_{2m-2}+3F_{m-2}^2)(4F_{2m-2}+3F^2_{m-1})+F_{2m-2}F_{2m+2}\\
&\quad = 4F_{2m-2}^2+F_{2m-2}F_{2m+2}+12F^2_{m-2}F_{2m-2}+3F_{m-1}^2F_{2m-2}+9F_{m-2}^2F^2_{m-1}\\
&\quad = 3F_{2m-2}^2+3F_{2m-2}F_{2m}+12F^2_{m-2}F_{2m-2}+3F_{m-1}^2F_{2m-2}+9F_{m-2}^2F^2_{m-1}\\
&\quad = 3\left[F_{2m-2}^2+F_{2m-2}F_{2m}+4F^2_{m-2}F_{2m-2}+F_{m-1}^2F_{2m-2}+3F_{m-2}^2F^2_{m-1}\right]\\
&\quad = 3\left[F_{2m-2}(F_{2m-2}+F_{2m}+4F^2_{m-2}+F_{m-1}^2)+3F_{m-2}^2F^2_{m-1}\right]\\
&\quad = 3\left[F_{2m-2}(F_{2m-2}+F_{2m}+3F^2_{m-2}+F_{2m-3})+3F_{m-2}^2F^2_{m-1}\right]\\
&\quad = 3\left[F_{2m-2}(F_{2m+1}+3F^2_{m-2})+3F_{m-2}^2F^2_{m-1}\right]\\
&\quad = 3\left[F_{2m-2}(F_m^2+F_{m+1}^2+3F^2_{m-2})+3F_{m-2}^2F^2_{m-1}\right]\\
&\quad = 3\left[F_{2m-2}(F_m^2+2F_{m-2}^2+2(F^2_{m-1}+F^2_m))+3F_{m-2}^2F^2_{m-1}\right]\\
&\quad = 3\left[F_{2m-2}(3F_m^2+2F_{2m-3})+3F_{m-2}^2F^2_{m-1}\right]\\
&\quad = 3\left[F_{2m-2}(3F_m^2+2F_{2m-3})+3(F_m^2-F_{2m-2})F^2_{m-1}\right]\\
&\quad = 3\left[F_{2m-2}(3(F_m^2-F_{m-1}^2)+2F_{2m-3})+2F_m^2F^2_{m-1}-F_{m-1}^3F_m+F_mF_{m+1}F_{m-1}^2\right]\\
&\quad = 3\left[F_{2m-2}(3(F_m^2-F_{m-1}^2)+2F_{2m-3})+F_m^2F^2_{m-1}+F_mF_{m-1}^2(F_m-F_{m-1})+F_mF_{m+1}F_{m-1}^2\right]\\
&\quad = 3\left[F_{2m-2}(3(F_m^2-F_{m-1}^2)+2F_{2m-3})+F_mF_{m-1}^2(F_m+F_{m-2})+F_mF_{m+1}F_{m-1}^2\right]\\
&\quad = 3\left[F_{2m-2}(3(F_m^2-F_{m-1}^2)+2F_{2m-3})+F_mF_{m-1}^2L_{m-1}+F_mF_{m+1}F_{m-1}^2\right]\\
&\quad = 3\left[F_{2m-2}(3(F_m^2-F_{m-1}^2)+2F_{2m-3})+F_mF_{m-1}F_{2m-2}+F_mF_{m+1}F_{m-1}^2\right]\\
&\quad = 3\left[F_{2m-2}(3(F_m^2-F_{m-1}^2)+2F_{2m-3}+F_mF_{m-1})+F_mF_{m+1}F_{m-1}^2\right]\\
&\quad = 3\left[F_{2m-2}(3F_{m+1}F_{m-2}+2F_{2m-3}+F_mF_{m-1})+F_mF_{m+1}F_{m-1}^2\right]
\end{align*}
\end{proof}
\begin{lemma}\label{base}
For integer values of $m$ greater than 3
\begin{align*}
&\frac{m+1}{5}+\frac{4F_{m+1}}{5L_{m+1}} -\frac{F_{m-3}(F_{m+2}+F_{m-2})}{F_{2m+2}}=\\
&\quad\quad = \frac{(F_{2m-2}+3F_{m-2}^2)(4F_{2m-2}+3F^2_{m-1})}{3F_{2m-2}F_{2m+2}}+\frac{1}{3}+\sum_{i=1}^{m-2}\frac{F_iF_{i+1}}{L_iL_{i+1}}.\end{align*}
\end{lemma}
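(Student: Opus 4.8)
The right-hand side of the statement is exactly the value $r_{m,3}(1,n)$ delivered by Theorem~\ref{thm:effResBend} (set $k=3$, so $p=1$ and $\ell=m-2$; the $i=1$ term of the first sum there supplies the lone $\tfrac13$), while the left-hand side is the claimed closed form with only the $j=3$ term of the main sum written out. So the statement is a pure Fibonacci--Lucas identity, and the plan is to transform the right-hand side into the left-hand side using the three auxiliary identities just proved (Lemmas~\ref{lem:nick}, \ref{lema}, \ref{lemb}), the elementary facts $F_{2m}=F_mL_m$ and $3F_m=F_{m+2}+F_{m-2}$, and a closed-form evaluation of the sum $\sum_{i=1}^{N}\frac{F_iF_{i+1}}{L_iL_{i+1}}$.

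First I would dispose of the bulky rational term. Lemma~\ref{lemb} rewrites its numerator and Lemma~\ref{lema} simplifies the outcome, so that the rational term together with the explicit $\tfrac13$ collapses to $\frac{F_{m+1}F_{m+2}-3F_{m-3}F_m}{F_{2m+2}}+\frac{F_mF_{m+1}F_{m-1}^2}{F_{2m-2}F_{2m+2}}$. Then, using $F_{2m+2}=F_{m+1}L_{m+1}$ and $F_{2m-2}=F_{m-1}L_{m-1}$, Lemma~\ref{lem:nick} converts $\frac{F_{m+1}F_{m+2}}{F_{2m+2}}+\frac{F_mF_{m+1}F_{m-1}^2}{F_{2m-2}F_{2m+2}}$ into $\frac{2F_{m+1}^2}{L_mL_{m+1}}+\frac{F_{m-1}F_m}{L_{m-1}L_m}$. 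The crucial observation is that $\frac{F_{m-1}F_m}{L_{m-1}L_m}$ is precisely the $i=m-1$ term of the Fibonacci--Lucas sum, so the whole right-hand side of the statement becomes $\frac{2F_{m+1}^2}{L_mL_{m+1}}+\sum_{i=1}^{m-1}\frac{F_iF_{i+1}}{L_iL_{i+1}}-\frac{3F_{m-3}F_m}{F_{2m+2}}$.

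On the left-hand side, $3F_m=F_{m+2}+F_{m-2}$ turns $-\frac{F_{m-3}(F_{m+2}+F_{m-2})}{F_{2m+2}}$ into $-\frac{3F_{m-3}F_m}{F_{2m+2}}$, which cancels the matching term above; what is left to prove is $\frac{m+1}{5}+\frac{4F_{m+1}}{5L_{m+1}}=\frac{2F_{m+1}^2}{L_mL_{m+1}}+\sum_{i=1}^{m-1}\frac{F_iF_{i+1}}{L_iL_{i+1}}$. I would evaluate the sum by telescoping: the per-term identity $\frac{F_iF_{i+1}}{L_iL_{i+1}}=\frac15-\frac15\big(\frac{F_{i+1}}{L_{i+1}}-\frac{F_i}{L_i}\big)$, equivalently $5F_iF_{i+1}=L_iL_{i+1}-F_{i+1}L_i+F_iL_{i+1}$, is checked by writing each Lucas number via $L_j=F_{j+1}+F_{j-1}$ and expanding into a polynomial identity in two consecutive Fibonacci numbers. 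Summing from $i=1$ and using $F_1/L_1=1$ gives $\sum_{i=1}^{m-1}\frac{F_iF_{i+1}}{L_iL_{i+1}}=\frac{m}{5}-\frac{F_m}{5L_m}$, after which the claim reduces to $\frac15+\frac{4F_{m+1}}{5L_{m+1}}+\frac{F_m}{5L_m}=\frac{2F_{m+1}^2}{L_mL_{m+1}}$, i.e., clearing denominators, $L_mL_{m+1}+4F_{m+1}L_m+F_mL_{m+1}=10F_{m+1}^2$, which is immediate from $L_m=2F_{m+1}-F_m$ and $L_{m+1}=2F_m+F_{m+1}$.

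The main difficulty is bookkeeping, not ideas: Lemmas~\ref{lem:nick}, \ref{lema}, and \ref{lemb} are each already a substantial stretch of Fibonacci algebra, and chaining them with the substitutions $F_{2m\pm2}=F_{m\pm1}L_{m\pm1}$, the identity $3F_m=F_{m+2}+F_{m-2}$, and the telescoped sum---while tracking signs and avoiding off-by-one slips in the summation index---is where the care lies. The only genuinely non-mechanical point is noticing that the term $\frac{F_{m-1}F_m}{L_{m-1}L_m}$ produced by Lemma~\ref{lem:nick} is exactly the term that extends the sum from $i\le m-2$ to $i\le m-1$; once that is spotted, everything lines up.
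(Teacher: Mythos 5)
Your proof is correct and follows essentially the same route as the paper's: the same chain of Lemmas~\ref{lemb}, \ref{lema}, and \ref{lem:nick}, the substitutions $F_{2m\pm 2}=F_{m\pm 1}L_{m\pm 1}$, the identity $3F_m=F_{m+2}+F_{m-2}$, and the absorption of $\frac{F_{m-1}F_m}{L_{m-1}L_m}$ as the $i=m-1$ term of the sum, just run from the right-hand side to the left rather than the reverse. The only difference is that you derive the telescoped value $\sum_{i=1}^{m-1}\frac{F_iF_{i+1}}{L_iL_{i+1}}=\frac{m}{5}-\frac{F_m}{5L_m}$ and the closing identity $L_mL_{m+1}+4F_{m+1}L_m+F_mL_{m+1}=10F_{m+1}^2$ from scratch, where the paper simply cites Propositions 51 and 52 of \cite{bef}.
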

\begin{proof}
By~\cite[Proposition 52]{bef}
\[
\frac{2F_{m+1}^2}{L_m L_{m+1}} + \frac{mL_{m} - F_{m}}{5 L_{m}}= \frac{m+1}{5} +  \frac{4F_{m+1}}{5L_{m+1}},
\]
and by~\cite[Proposition 51]{bef}
\[
 \sum_{i = 1}^{m-1} \frac{F_i F_{i+1}}{L_i L_{i+1}} = \frac{m L_{m} - F_{m}}{5 L_{m}}.
\]
Hence (applying Lemmas~\ref{lem:nick}-\ref{lemb})
\begin{align*}
&\frac{m+1}{5}+\frac{4F_{m+1}}{5L_{m+1}} -\frac{F_{m-3}(F_{m+2}+F_{m-2})}{F_{2m+2}}\\
&\quad = \frac{2F_{m+1}^2}{L_m L_{m+1}}+\frac{F_{m-1}F_m}{L_{m-1}{L_m}}  -\frac{F_{m-3}(F_{m+2}+F_{m-2})}{F_{2m+2}}+   \sum_{i = 1}^{m-2} \frac{F_i F_{i+1}}{L_i L_{i+1}}\\
&\quad = \frac{2F_{m+1}^3}{L_m F_{2m+2}}+\frac{F_{m-1}^2F_m}{F_{2m-2}{L_m}}  -\frac{F_{m-3}(F_{m+2}+F_{m-2})}{F_{2m+2}}+   \sum_{i = 1}^{m-2} \frac{F_i F_{i+1}}{L_i L_{i+1}}\\
&\quad = \frac{2F_{2m-2}F_{m+1}^3+F_{2m+2}F_{m-1}^2F_m}{L_m F_{2m+2}F_{2m-2}}  -\frac{F_{m-3}(F_{m+2}+F_{m-2})}{F_{2m+2}}+   \sum_{i = 1}^{m-2} \frac{F_i F_{i+1}}{L_i L_{i+1}}\\
&\quad = \frac{L_m(F_{m+1}F_{2m-2}F_{m+2}+F_{m}F_{m-1}^2F_{m+1})}{L_m F_{2m+2}F_{2m-2}}  -\frac{F_{m-3}(F_{m+2}+F_{m-2})}{F_{2m+2}}+   \sum_{i = 1}^{m-2} \frac{F_i F_{i+1}}{L_i L_{i+1}} \quad\text{(By Lemma~\ref{lem:nick})}\\
&\quad = \frac{F_{m+1}F_{2m-2}F_{m+2}+F_{m}F_{m-1}^2F_{m+1}- F_{m-3}F_{2m-2}(F_{m+2}+F_{m-2})}{ F_{2m+2}F_{2m-2}} +   \sum_{i = 1}^{m-2} \frac{F_i F_{i+1}}{L_i L_{i+1}}\\
&\quad = \frac{F_{2m-2}(F_{m+1}F_{m+2}- F_{m-3}(F_{m+2}+F_{m-2}))+F_{m}F_{m-1}^2F_{m+1}}{ F_{2m+2}F_{2m-2}} +   \sum_{i = 1}^{m-2} \frac{F_i F_{i+1}}{L_i L_{i+1}}\\
&\quad = \frac{F_{2m-2}(F_{m+1}F_{m+2}- 3F_{m-3}F_m)+F_{m}F_{m-1}^2F_{m+1}}{ F_{2m+2}F_{2m-2}} +   \sum_{i = 1}^{m-2} \frac{F_i F_{i+1}}{L_i L_{i+1}}\\
&\quad = \frac{F_{2m-2}(2F_{2m-3}+3F_{m+1}F_{m-2}+F_{m}F_{m-1})+F_{m}F_{m-1}^2F_{m+1}}{ F_{2m+2}F_{2m-2}} +   \sum_{i = 1}^{m-2} \frac{F_i F_{i+1}}{L_i L_{i+1}} \quad\text{(By Lemma~\ref{lema})}\\
&\quad = \frac{3\left[F_{2m-2}(2F_{2m-3}+3F_{m+1}F_{m-2}+F_{m}F_{m-1})+F_{m}F_{m-1}^2F_{m+1}\right]}{ 3F_{2m+2}F_{2m-2}} +   \sum_{i = 1}^{m-2} \frac{F_i F_{i+1}}{L_i L_{i+1}}\\
&\quad = \frac{(F_{2m-2}+3F_{m-2}^2)(4F_{2m-2}+3F^2_{m-1})+F_{2m-2}F_{2m+2}}{ 3F_{2m+2}F_{2m-2}} +   \sum_{i = 1}^{m-2} \frac{F_i F_{i+1}}{L_i L_{i+1}} \quad\text{(By Lemma~\ref{lemb})}\\
&\quad = \frac{(F_{2m-2}+3F_{m-2}^2)(4F_{2m-2}+3F^2_{m-1})}{3F_{2m-2}F_{2m+2}}+\frac{1}{3}+\sum_{i=1}^{m-2}\frac{F_iF_{i+1}}{L_iL_{i+1}}.
\end{align*}
\end{proof}

\bibliography{references}{}
\bibliographystyle{plain} %%use halpha instead of alpha, because it does a better job with arXiv refs.

\end{document}